\newtheorem{theorem}{Theorem}
\newtheorem*{thm}{Theorem \ref{thm}}
\newtheorem{lemma}[theorem]{Lemma}
\newtheorem{proposition}[theorem]{Proposition}
\newtheorem{corollary}[theorem]{Corollary}
\numberwithin{theorem}{section}
\DeclareMathOperator{\arccsc}{arccsc}
\newcommand{\rr}{\mathbb{R}}
\let\oldabs\abs
\def\abs{\@ifstar{\oldabs}{\oldabs*}}
\let\oldnorm\norm
\def\norm{\@ifstar{\oldnorm}{\oldnorm*}}
\begin{document}

\title{Convex Ancient Solutions to Anisotropic Curve Shortening Flow}
\author{Theodora Bourni and Benjamin Richards}
\maketitle
\bigskip

\section*{Abstract}
We construct a translating solution to anisotropic curve shortening flow and show that for a given anisotropic factor $g:S^1\to\rr_+$, and a given direction and speed, this translator is unique. We then construct an ancient compact solution to anisotropic curve shortening flow, and show that this solution, along with the appropriate translating solution, are the unique solutions to anisotropic curve shortening flow that lie in a slab of a given width and no smaller.\\

\begin{section}{Introduction}

In what follows, $M^1$ will denote a 1-dimensional manifold, generally either $\rr$ or $S^1$, and $I$ will be some interval of the real line, possibly infinite. We say that a family of curves $X(u,t):M^1\times I\to\rr^2$ is a solution to Anisotropic Curve Shortening Flow (ACSF), with respect to the factor $g$, if\\

\[\dpd{X}{t}(u,t)=-g(\mathsf{N})\kappa(u,t) \mathsf{N}(u,t),\text{ for all }(u,t)\in M^1\times I,\]
\\
where $g$ is some smooth, positive function defined on $S^1$, $\mathsf{N}(u,t)$ is a choice of normal vector, and $\kappa(u,t)$ is the curvature with respect to this normal. We will require that our curves be embedded and strictly convex, i.e., that $\kappa>0$. We will choose $\mathsf{N}$ to be pointing towards the non-convex region of the plane. Our sign convention is such that a circle has positive curvature with respect to the outward pointing unit normal.\\

ACSF is a generalization of Curve Shortening Flow (CSF),  introduced by Taylor \cite{taylor} and Angenent-Gurtin \cite{angenentgurtin} as a physical model for certain crystal interfaces. Gage \cite{gage} studied the case where $g$ is $\pi$-periodic as a way to study regular curve shortening flow on the Euclidean plane equipped with a Minkowski norm, and proved that there is a unique self-similar solution to ACSF when $g$ exhibits this particular symmetry. While it has been proved that self-similar solutions exist without this symmetry \cite{GageLi}, it remains an open question whether these solutions are unique. \\

We say that $X(u,t)$ is an \textit{ancient solution} if $I=(-\infty,a)$ for some $a\in\rr$, and we say that it is an \textit{eternal solution} if $I=\rr$. The study of ancient solutions for CSF arose from the investigation of singularity formation, as after normalization, the limiting shape of a curve approaching a singularity is that of an ancient solution. Thus, the classifcation of all ancient solutions is a useful tool in the study of the behavior of a flow. In the case of CSF, compact, convex ancient solutions were classified by Daskalopoulos-Hamilton-Sesum \cite{DHS}, and this classification was extended to all convex cuves by Bourni-Langford-Tinaglia \cite{BLT}. Classification of convex ancient solutions for curves solving a flow based on the curvature raised to certain powers was done by Bourni-et.al \cite{betal}.\\

In this paper we will adapt and extend some of the methods used in these previous works in order to construct convex ancient solutions to ACSF that lie within a slab of a given width. We then show that the solutions constructed here are the only such ancient solutions to ACSF. This makes up the content of our main result, Theorem 5.3.\\

\begin{thm}
Let $g:S^1\to\rr_+$ be a smooth and strictly positive function, $v\in S^1$, and $w\in\mathbb{R}_+$. There exists a unique, up to translation, compact ancient solution to ACSF with respect to $g$ that lies within a slab parallel to $v$ of width $w$ and in no smaller slab. There exist two, up to translation, translating solutions to ACSF with respect to $g$ that lie within a slab parallel to $v$ of width $w$ and in no smaller slab, one that travels in the $v$ direction and one that travels in the $-v$ direction.
\end{thm}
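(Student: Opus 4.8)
Write $\nu$ for the unit vector obtained by rotating $v$ through $\pi/2$, so that the prescribed slab is $S_w=\{x\in\mathbb{R}^2:\abs{\langle x,\nu\rangle}\le w/2\}$, whose two boundary lines are stationary solutions of ACSF. Choose a sequence of smooth, strictly convex, compact domains $\Omega_k\subset S_w$ with enclosed area tending to infinity, let $X_k$ be the solution of ACSF with initial datum $\partial\Omega_k$, and time-translate $X_k$ so that a fixed normalisation holds at $t=0$ (say, that $X_k(\cdot,0)$ encloses unit area). By the avoidance principle — comparing $X_k$ with the two boundary lines of $S_w$ — each $X_k$ stays in $S_w$, and the normalisation forces the initial time of $X_k$ to tend to $-\infty$. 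Convexity together with interior estimates for the support-function equation $\partial_t h=-g(\theta)\kappa=-g(\theta)/(h+h_{\theta\theta})$, which is uniformly parabolic away from the extinction time once the radius of curvature is pinched, yield a subsequential limit $X_\infty$ on a time interval $(-\infty,\omega)$: a smooth, compact, strictly convex ancient solution of ACSF contained in $S_w$, which the strong maximum principle places in the open slab. That $X_\infty$ lies in no narrower slab is the one point in the existence half that takes care: its $\nu$-width is monotone (see below), hence has a limit $\beta\le w$ as $t\to-\infty$, and one rules out $\beta<w$ by a barrier argument — e.g. comparing $X_\infty$, on backward time intervals, with large portions of translating solutions of width close to $w$, or with flows from long thin rectangles inscribed in the $\Omega_k$.

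\textbf{The two translators.} By the earlier sections, for each direction and speed there is a unique (up to translation) translating solution of ACSF with respect to $g$. A translator moving in a direction $u$ is a complete convex curve both of whose ends escape to infinity in the $\pm u$ direction along the walls of its minimal slab, which is therefore parallel to $u$; since a slab is parallel to a single direction, such a translator lies in a slab parallel to $v$ only when $u=\pm v$. For $u=v$ the width of the minimal slab is a continuous, strictly decreasing function of the speed with range $(0,\infty)$ — as one reads off from the profile ODE of the translator, slower translators being fatter — so exactly one speed gives width $w$, producing the unique translator of width $w$ moving in the $v$ direction; the case $u=-v$ is identical. This yields exactly two translating solutions, up to translation, lying in a slab parallel to $v$ of width $w$ and in no smaller slab. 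Note these never coincide with the compact ancient solution, since a translator is non-compact, so the two halves of the theorem do not interfere.

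\textbf{Uniqueness of the compact ancient solution: backward asymptotics.} Let $X^1,X^2$ be two compact convex ancient solutions lying in a slab parallel to $v$ of width $w$ and in no narrower slab; crucially, the argument below uses only the behaviour as $t\to-\infty$, since uniqueness of the shrinking self-similar solution is not available. The $\nu$-width $W^i(t)$ is strictly decreasing, because at the two points of $X^i(\cdot,t)$ realising it one has $\partial_t W^i=-g\kappa-g\kappa<0$; moreover the extreme support values of $X^i(\cdot,t)$ in the directions $\pm\nu$ are separately monotone, so the minimal $\nu$-slabs of $X^i(\cdot,t)$ are nested in $t$. Were $W^i(-\infty)<w$, the whole trajectory of $X^i$ would sit in this narrower slab, contrary to hypothesis; hence $W^i(t)\nearrow w$ as $t\to-\infty$. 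Next, translating $X^i(\cdot,t)$ so that its extreme point in the $v$-direction lies at the origin and extracting a limit as $t\to-\infty$, one obtains a convex ancient solution of ACSF with $\nu$-width identically $w$; by the same computation (now giving a contradiction if the limit met a wall with positive curvature) the limit never touches the walls, so it is complete, asymptotic to both walls, and — by the rigidity of such solutions together with the translator uniqueness above — is one of $T^{\pm}_w$. Thus each of $X^1,X^2$ converges, modulo translation, to the pair $\{T^+_w,T^-_w\}$ at its two ends.

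\textbf{Conclusion, and the main difficulty.} Pass to the support functions $h^1,h^2$ of $X^1,X^2$, and translate $X^2$ so as to align its slab with that of $X^1$ and match the asymptotic position of one of the two ends. Then $\psi=h^1-h^2$ solves the linear, uniformly parabolic equation $\partial_t\psi=a(\theta,t)\,(\psi_{\theta\theta}+\psi)$ with $a=g/((h^1+h^1_{\theta\theta})(h^2+h^2_{\theta\theta}))>0$, so by the Sturmian oscillation theorem (Angenent) the number of zeros of $\theta\mapsto\psi(\theta,t)$ is finite, non-increasing in $t$, and drops strictly across any multiple zero. The backward asymptotics determine $\lim_{t\to-\infty}\psi(\cdot,t)$ and so bound its zero count from above; feeding this into the Sturmian monotonicity, and using that $X^1,X^2$ are asymptotic to the \emph{same} pair of translators so that one cannot have $h^1>h^2$ everywhere for all time, forces $\psi\equiv0$, i.e. $X^1=X^2$ up to translation. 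The crux — and the step that departs most from the isotropic case — is the matching of the two ends: for symmetric $g$ one pins down the relative translation of $X^1$ and $X^2$ by the reflection symmetry of the pancake, but for general $g$ no such symmetry exists, and one must instead show directly that aligning the asymptotic position of one end also aligns the other (equivalently, that the relative displacement of the corresponding caps agrees on both sides), so that the limiting difference of support functions vanishes identically and the zero count is genuinely forced to $0$. I expect this synchronisation of the two ends to be the principal obstacle.
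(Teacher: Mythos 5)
Your uniqueness argument contains a genuine, self-acknowledged gap, and it sits exactly at the point where the anisotropy matters. The Sturmian zero-counting strategy for $\psi=h^1-h^2$ requires you to control the zero count of $\psi(\cdot,t)$ as $t\to-\infty$, which in turn requires pinning down the relative spatial translation of $X^1$ and $X^2$ so that \emph{both} asymptotic ends are simultaneously aligned. You correctly observe that for general $g$ there is no reflection symmetry to do this, and you leave the ``synchronisation of the two ends'' unresolved; but without it the argument does not close, since aligning one end leaves a free translation that shows up as a nonzero limit of $\psi$ at the other end. The paper avoids this entirely by a different mechanism: it normalizes both solutions to expire at $t=0$, compares the vertical extents $L(t)=-\langle\Gamma_t(0),e_2\rangle+\langle\Gamma_t(\pi),e_2\rangle$ and $L_0(t)$ using the Harnack inequality (which gives $g(0)\kappa(0,t)\geq 1$, $g(\pi)\kappa(\pi,t)\geq\sigma$, hence monotonicity of $L(t)+t(1+\sigma)$), and then runs an avoidance-principle argument on the two halves $\theta\in(-\pi,0)$ and $\theta\in(0,\pi)$ separately to show that $L>L_0$ would force $G_{t_0}$ strictly inside $\Gamma_{t_0}$, contradicting the common extinction time; a final time-translation-and-limit argument upgrades $L=L_0$ to coincidence. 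The common extinction time is what replaces the missing symmetry, and it is the ingredient your proposal lacks.

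Two smaller points. First, your existence argument (flowing arbitrary large convex domains in the slab and normalizing) is also different from the paper's and is weakest precisely on the ``no smaller slab'' claim, which you defer to an unspecified barrier argument; the paper instead builds the old-but-not-ancient initial curves by gluing a $+e_2$ translator of speed $1$ to a $-e_2$ translator of speed $\sigma$ (chosen so both have width $w_g$), which yields the explicit horizontal-reach estimate $h_R(t)\geq w_g-C/(-t(1+\sigma)+C/w_g)$ and hence full width in the limit. Second, your treatment of the translators (width strictly decreasing in speed with range $(0,\infty)$, exactly one speed per direction $\pm v$) is correct and matches the paper's Proposition 2.1 and Corollary 2.2.
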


\section{Acknowledgements}
We would like to thank Mat Langford for many valuable discussions on the topic.

Both authors were supported by grant DMS-2105026 of the National Science Foundation.
\end{section}

\begin{section}{Preliminaries}
In this section we fix some notation and calculate some useful evolution equations.\\

We will let $\theta=\theta(u,t)$ be the tangent-angle, that is the angle that the tangent to $X(M\times I)$ at $X(u,t)$ makes with the $x$-axis. We will denote this tangent by $\mathsf{T}(u,t)$, and we parametrize our curves counterclockwise. Thus we have that\\

\[\mathsf{T}=(\cos \theta,\sin \theta)\]
\\
and\\

\[\mathsf{N}=(\sin \theta,-\cos \theta).\]

We will usually use $\theta$ as the argument for $g$, and we will often abuse notation and fail to write the arguments at all. We will use $u$ for an arbitrary parametrization, and reserve the use of $s$ for arc-length parametrization.\\

We have the Frenet-Serret formulas
\begin{align*}
\dpd{\mathsf{T}}{s}=&-\kappa\mathsf{N}\\
\dpd{\mathsf{N}}{s}=&\kappa\mathsf{T}.
\end{align*}

In general, the arc-length parametrization $s$ will depend on time $t$. Thus, given a function $f$ defined on our family of curves, we have the following commutator formula\\

\[\md{f}{2}{s}{ }{t}{ }=\md{f}{2}{t}{ }{s}{ }+g\kappa^2\dpd{f}{s}.\]
\\
Moving forward, we will denote partial derivatives using subscripts, unless doing so would be made particularly annoying by the existence of other indices.\\

\begin{proposition}
We have the following evolution equations for a family of curves that satisfy ACSF.

\begin{align*}
\mathsf{T}_t=&-(g\kappa)_s\mathsf{N}\\
\mathsf{N}_t=&(g\kappa)_s\mathsf{T}\\
\theta_t=&(g\kappa)_s\\
\kappa_t=&(g\kappa)_{ss}+g\kappa^3.
\end{align*}

When our curves are strictly convex, we may parametrize our curves with respect to $\theta$. When we do so, we will let $t=\tau$, and take our partials with $\theta$ fixed instead of $u$ or $s$.  With this parametrization, we have
\[\kappa_{\tau}=\kappa^2((g\kappa)_{\theta\theta}+g\kappa).\]

If our curve is compact, and the area contained within the curve is denoted by $A(t)$, we have\\

\begin{equation}
A_t=-\int_{S^1}g(\theta)\dif \theta.\label{(0.1)}
\end{equation}

\end{proposition}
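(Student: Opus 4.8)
The plan is to set $F := g\kappa$, so the flow reads $X_t = -F\mathsf{N}$, and to extract everything from the given commutator relation together with the Frenet--Serret formulas. First I would note that the commutator relation is equivalent to the evolution $\partial_t(\dif s) = -g\kappa^2\,\dif s$ of the line element; this follows by differentiating $|X_u|^2$ in $t$, using $X_{ut} = \partial_u(-F\mathsf{N})$ and $\partial_u\mathsf{N} = |X_u|\kappa\mathsf{T}$, which gives $\partial_t|X_u| = -g\kappa^2|X_u|$. Then $\mathsf{T}_t = \partial_t X_s = \partial_s X_t + g\kappa^2 X_s = -(g\kappa)_s\mathsf{N} - g\kappa\,\mathsf{N}_s + g\kappa^2\mathsf{T}$, and the Frenet relation $\mathsf{N}_s = \kappa\mathsf{T}$ makes the two tangential terms cancel — a cancellation that occurs precisely because the normal speed is $g\kappa$ and not a general function — leaving $\mathsf{T}_t = -(g\kappa)_s\mathsf{N}$.

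The remaining first-order identities then follow formally. Differentiating $|\mathsf{N}|^2 = 1$ forces $\mathsf{N}_t$ to be a multiple of $\mathsf{T}$, and differentiating $\mathsf{T}\cdot\mathsf{N} = 0$ identifies the multiple: $\mathsf{N}_t = -(\mathsf{T}_t\cdot\mathsf{N})\mathsf{T} = (g\kappa)_s\mathsf{T}$. Differentiating $\mathsf{T} = (\cos\theta,\sin\theta)$ in $t$ gives $\mathsf{T}_t = -\theta_t\mathsf{N}$, so comparing with the formula for $\mathsf{T}_t$ yields $\theta_t = (g\kappa)_s$. For the curvature I would use $\kappa = \theta_s$ (immediate from Frenet and the explicit forms of $\mathsf{T}$ and $\mathsf{N}$) and apply the commutator once more: $\kappa_t = \partial_t\theta_s = \partial_s\theta_t + g\kappa^2\theta_s = (g\kappa)_{ss} + g\kappa^3$.

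To pass to the $\theta$-parametrization I would use that $\theta_s = \kappa > 0$ on a strictly convex curve, so $\theta$ is an admissible spatial coordinate; the change of variables gives $\partial_s = \kappa\,\partial_\theta$ and, since holding $\theta$ fixed amounts to following its drift, $\partial_\tau = \partial_t - \theta_t\,\partial_\theta = \partial_t - (g\kappa)_s\,\partial_\theta$. Substituting $\kappa_t = (g\kappa)_{ss} + g\kappa^3$ and rewriting $(g\kappa)_s = \kappa(g\kappa)_\theta$, $(g\kappa)_{ss} = \kappa\big(\kappa_\theta(g\kappa)_\theta + \kappa(g\kappa)_{\theta\theta}\big)$, $\kappa_s = \kappa\kappa_\theta$, the terms containing $\kappa_\theta(g\kappa)_\theta$ cancel and one is left with $\kappa_\tau = \kappa^2\big((g\kappa)_{\theta\theta} + g\kappa\big)$.

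Finally, for a compact convex solution I would start from $A(t) = \tfrac12\oint X\cdot\mathsf{N}\,\dif s$ (the divergence theorem, $\mathsf{N}$ being the outward unit normal), differentiate under the integral using $X_t\cdot\mathsf{N} = -g\kappa$, $\mathsf{N}_t = (g\kappa)_s\mathsf{T}$ and $\partial_t(\dif s) = -g\kappa^2\,\dif s$, and then integrate by parts the term $\oint(g\kappa)_s(X\cdot\mathsf{T})\,\dif s$ using $\partial_s(X\cdot\mathsf{T}) = 1 - \kappa(X\cdot\mathsf{N})$; the terms involving the position $X$ cancel in pairs, leaving $A_t = -\oint g\kappa\,\dif s = -\int_{S^1}g(\theta)\,\dif\theta$, the last equality because $\dif\theta = \kappa\,\dif s$. (Alternatively, one can simply invoke that the region enclosed by any curve moving with normal speed $-g\kappa$ has area changing at rate $-\oint g\kappa\,\dif s$.) None of these computations is deep; the only places I would expect a first attempt to slip are the sign and orientation conventions — in particular that $\mathsf{N}$ points out of the convex region, which is what makes several signs come out as stated — and the bookkeeping in the $(s,t)\to(\theta,\tau)$ change of variables.
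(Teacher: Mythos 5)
Your proposal is correct and follows essentially the same route as the paper: the first four identities from the commutator formula together with Frenet--Serret, the $\theta$-parametrized equation from the same chain-rule substitution $(g\kappa)_s=\kappa(g\kappa)_\theta$, $(g\kappa)_{ss}=\kappa\kappa_\theta(g\kappa)_\theta+\kappa^2(g\kappa)_{\theta\theta}$, and the area formula from Green's/divergence theorem plus $\dif\theta=\kappa\,\dif s$. You simply supply more of the routine details (the evolution of the line element, the integration by parts) that the paper leaves to the reader.
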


\begin{proof}

The first four evolution equations follow directly from the commutator formula and the Frenet-Serret formulas. The fifth follows by the chain rule, for we have\\

\[\kappa_t=\kappa_{\tau}+\kappa_{\theta}\theta_t=\kappa_{\tau}+\kappa\kappa_{\theta}(g\kappa)_s=\kappa_{\tau}+\kappa\kappa_{\theta}(g\kappa)_{\theta},\]
\\
while\\

\[(g\kappa)_{ss}=\kappa(\kappa(g\kappa)_{\theta})_{\theta}=\kappa\kappa_{\theta}(g\kappa)_{\theta}+\kappa^2(g\kappa)_{\theta\theta}.\]
\\
Substitution of these two into the expression for $\kappa_t$ above gives us our claim.\\

The final evolution equation is given by direct computation and Green's formula for the area inside of a closed curve.
\end{proof}

We also have a Harnack type inequality for curves undergoing ACSF.

\begin{proposition}
For a solution to ACSF defined on $[\alpha,T)$, for all $\tau\in (\alpha,T)$ we have
\begin{enumerate}[label=(\roman*)]
\item $\kappa((g\kappa)_{\theta\theta}+g\kappa)+\frac{1}{2(\tau-\alpha)}\geq 0$\\
\item $\kappa\sqrt{\tau-\alpha}$ is increasing with respect to $\tau$.
\end{enumerate}
\end{proposition}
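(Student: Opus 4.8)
The plan is to reduce both parts to a single scalar differential inequality for the quantity
\[ Q := \kappa\bigl((g\kappa)_{\theta\theta} + g\kappa\bigr), \]
which, by the last evolution equation of the previous proposition, satisfies $\kappa_\tau = \kappa^2\bigl((g\kappa)_{\theta\theta}+g\kappa\bigr) = \kappa Q$, i.e.\ $(\log\kappa)_\tau = Q$. Part (i) is then exactly the claim that $Q \geq -\tfrac{1}{2(\tau-\alpha)}$, and part (ii) follows at once from it, since
\[ \frac{d}{d\tau}\bigl(\kappa\sqrt{\tau-\alpha}\bigr) = \sqrt{\tau-\alpha}\,\Bigl(\kappa_\tau + \frac{\kappa}{2(\tau-\alpha)}\Bigr) = \kappa\sqrt{\tau-\alpha}\,\Bigl(Q + \frac{1}{2(\tau-\alpha)}\Bigr) \geq 0 \]
using $\kappa > 0$. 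So the whole content is the estimate $Q \geq -\tfrac{1}{2(\tau-\alpha)}$, which I would establish by a maximum-principle argument applied to $Q$.

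First I would derive the evolution equation of $Q$. Writing $p := g\kappa$ and observing that, since $g$ depends on $\theta$ alone, $p_\tau = g\kappa_\tau = pQ$, a direct computation starting from $Q = \kappa(p_{\theta\theta}+p)$ --- using that $\partial_\theta$ and $\partial_\tau$ commute in the angle parametrization, together with the relations $\kappa_\tau = \kappa Q$ and $p_\tau = pQ$ to combine terms --- yields
\[ Q_\tau = g\kappa^2\, Q_{\theta\theta} + 2\kappa\,(g\kappa)_\theta\, Q_\theta + 2Q^2. \]
This is a reaction--diffusion equation with strictly positive diffusion coefficient $g\kappa^2$, a first-order transport term, and a nonnegative zero-order term $2Q^2$; the coefficient $2$ here is precisely what is needed below.

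Next I would run the parabolic maximum principle. In the closed, strictly convex case $Q$ is smooth on $S^1\times[\alpha,T)$; set $Q_{\min}(\tau) := \min_{\theta\in S^1} Q(\theta,\tau)$, which is locally Lipschitz in $\tau$. At a point where the spatial minimum is attained one has $Q_\theta = 0$ and $Q_{\theta\theta}\geq 0$, so the evolution equation gives $Q_{\min}'(\tau) \geq 2Q_{\min}(\tau)^2$ for a.e.\ $\tau$. For each $\epsilon>0$ the barrier $\phi_\epsilon(\tau) := -\tfrac{1}{2(\tau-\alpha+\epsilon)}$ solves $\phi_\epsilon' = 2\phi_\epsilon^2$ with $\phi_\epsilon(\alpha) = -\tfrac{1}{2\epsilon}$. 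Since the solution is smooth up to and including $\tau = \alpha$, the number $Q_{\min}(\alpha)$ is finite, hence $Q_{\min}(\alpha)\geq -\tfrac{1}{2\epsilon} = \phi_\epsilon(\alpha)$ once $\epsilon$ is small enough; an elementary ODE comparison (Gr\"onwall applied to $Q_{\min}-\phi_\epsilon$, using local Lipschitz continuity of $y\mapsto 2y^2$ and boundedness of $Q_{\min}$ on compact time subintervals) then gives $Q_{\min}(\tau)\geq\phi_\epsilon(\tau)$ throughout $[\alpha,T)$. Letting $\epsilon\to 0^+$ gives $Q(\theta,\tau)\geq Q_{\min}(\tau)\geq -\tfrac{1}{2(\tau-\alpha)}$, which is (i). (For a non-compact convex solution, such as a translator, one argues the same way, with control of $Q$ at the ends of the $\theta$-interval; for a translator $\kappa$ is $\tau$-independent in the angle parametrization, so $Q\equiv 0$ and the estimate is immediate.)

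The step I expect to be the main obstacle is the evolution equation for $Q$: one must handle $\partial_\theta^2\bigl((g\kappa)Q\bigr)$ carefully and use $\kappa_\tau = \kappa Q$ and $p_\tau = pQ$ to see that two separate $Q^2$ contributions appear and add, producing exactly the coefficient $2$ that matches the ODE $\phi' = 2\phi^2$ whose maximal solutions behave like $-\tfrac{1}{2(\tau-\alpha)}$. Everything afterward --- the maximum principle, the $\epsilon$-shifted barrier, the limit $\epsilon\to 0^+$, and deducing (ii) --- is routine, the only mild care needed being the behavior at the left endpoint $\tau=\alpha$, which the shifted barrier handles.
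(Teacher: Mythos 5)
Your proposal is correct and follows essentially the same route as the paper: the Harnack quantity $Q=\kappa\left((g\kappa)_{\theta\theta}+g\kappa\right)=\kappa_\tau/\kappa$, its reaction--diffusion evolution, ODE comparison against the solution of $\phi'=2\phi^2$, and then (ii) deduced from (i) via $(\log\kappa)_\tau=Q$. The only difference is cosmetic: your evolution equation $Q_\tau=g\kappa^2Q_{\theta\theta}+2\kappa(g\kappa)_\theta Q_\theta+2Q^2$ retains the factors of $g$ that the paper's displayed equation drops, which is immaterial to the maximum-principle argument since the drift term vanishes and the diffusion coefficient is positive at a spatial minimum either way.
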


\begin{proof}
The proof of (i) follows that in the textbook by Andrews et. al. \cite{EGF}. Defining a function\\

\[Q=\frac{(g\kappa)_\tau}{g\kappa}=\frac{\kappa_\tau}{\kappa}=\kappa((g\kappa)_{\theta\theta}+g\kappa),\]
\\
we have that its evolution is given by\\

\[Q_\tau=\kappa^2Q_{\theta\theta}+2\kappa\kappa_\theta Q_\theta+2Q^2.\]
\\
The result then follows by an ODE comparision principle with the solution $q(\tau)$  of $Q_\tau=2Q^2$:\\

\[q(t)=-\frac{1}{2(\tau-\alpha)}.\]
\\
For the proof of (ii), note that (i) and the evolution of $\kappa$ given in Proposition 1.1 implies that\\

\[(\log\kappa)_\tau+\log(\sqrt{\tau-\alpha})_\tau\geq 0.\]
\\
Since the logarithm is an increasing function, it then follows that $\kappa\sqrt{\tau-\alpha}$ must also be increasing with respect to $\tau$.
\end{proof}

As a useful corollary, we have that with a strictly convex ancient solution to ACSF, the curvature as a function of the tangent angle is nondecreasing with respect to time.

\begin{corollary}
If $X:M\times I\to\rr^2$ is a strictly convex ancient solution to ACSF, then $k_\tau\geq 0$ at all points of the solution.
\end{corollary}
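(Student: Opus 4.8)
The plan is to exploit the fact that an ancient solution is defined on $(-\infty,a)$, so that the parameter $\alpha$ appearing in Proposition 1.3 may be sent to $-\infty$. First I would rewrite the differential Harnack inequality (i) in terms of $\kappa_\tau$: using the evolution equation $\kappa_\tau=\kappa^2((g\kappa)_{\theta\theta}+g\kappa)$ from Proposition 1.1, inequality (i) becomes
\[
\frac{\kappa_\tau}{\kappa}+\frac{1}{2(\tau-\alpha)}\geq 0,
\]
equivalently $\kappa_\tau\geq -\dfrac{\kappa}{2(\tau-\alpha)}$, valid at every point of any ACSF solution defined on $[\alpha,T)$.

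Next, fix an arbitrary point $(\theta_0,\tau_0)$ of the ancient solution. For every $\alpha<\tau_0$, the restriction of $X$ to the time interval $[\alpha,a)$ is still a solution to ACSF on that interval, so the displayed inequality applies there and yields
\[
\kappa_\tau(\theta_0,\tau_0)\geq -\frac{\kappa(\theta_0,\tau_0)}{2(\tau_0-\alpha)}.
\]
Since $\kappa(\theta_0,\tau_0)$ is a fixed finite positive number while $\tau_0-\alpha\to\infty$ as $\alpha\to-\infty$, the right-hand side tends to $0$, and we conclude $\kappa_\tau(\theta_0,\tau_0)\geq 0$. As the point was arbitrary, $\kappa_\tau\geq 0$ at all points of the solution.

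I do not anticipate a genuine obstacle here; the only point requiring a moment's care is the observation that an ancient solution may, for each $\alpha\in(-\infty,a)$, be viewed as a solution starting at time $\alpha$, so that Proposition 1.3 is legitimately applicable with that $\alpha$. After this, the argument is a one-line limit. (One could run the argument equally well from part (ii): since $\kappa\sqrt{\tau-\alpha}$ is nondecreasing on $(\alpha,a)$ for every $\alpha$, and $\sqrt{(\tau_1-\alpha)/(\tau_2-\alpha)}\to 1$ as $\alpha\to-\infty$ for fixed $\tau_1<\tau_2$, it follows that $\kappa$ itself is nondecreasing in $\tau$, which is again the claim.)
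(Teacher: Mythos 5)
Your argument is correct and is essentially identical to the paper's proof: both apply the differential Harnack inequality for each starting time $\alpha$, use that an ancient solution is defined on $[\alpha,T)$ for every $\alpha$, and let $\alpha\to-\infty$ to conclude $\kappa_\tau/\kappa\geq 0$. (The relevant Harnack inequality is Proposition 1.2, not 1.3, but that is only a labeling slip.)
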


\begin{proof}
We have that (i) from Proposition 1.2 is true for all $\alpha$ for which the solution is defined on $[\alpha,T)$. Since the solution is ancient, the solution is defined for all $\alpha\in (-\infty,T)$, and by taking $\alpha\to-\infty$ we have that\\

\[Q=\frac{\kappa_\tau}{\kappa}\geq 0.\]
\end{proof}

\end{section}

\begin{section}{Translators}
We now look at the existence of translators under this flow. Suppose we have a unit vector $v\in S^1$, and we take $\psi$ to be the angle $v$ makes with the $x$-axis, i.e., $v=(\cos \psi,\sin \psi)$. If we had a family of curves that translated in the direction $v$ under our flow, we could write\\

\[X_t=(\cos \psi,\sin\psi)=-g\kappa\mathsf{N}+\Phi\mathsf{T}\]
\\
where the tangential term is a result of reparametrizing. Then we have

\begin{align*}
-g\kappa=&\langle X_t,\mathsf{N}\rangle\\=&\langle (\cos \psi,\sin\psi),(\sin\theta,-\cos\theta)\rangle\\=&\cos\psi\sin\theta-\sin\psi\cos\theta\\=&\sin(\theta-\psi).\end{align*}

Solving for $\kappa$ gives us\\

\[\kappa(\theta)=\frac{\sin(\psi-\theta)}{g(\theta)}.\]

To find an initial curve for this translating solution, we write

\begin{align}
x(\theta)=&x\left(\psi-\frac{\pi}{2}\right)+\int_{\psi-\frac{\pi}{2}}^{\theta}\frac{\cos (u)}{\kappa}\dif u\nonumber\\
=&x\left(\psi-\frac{\pi}{2}\right)+\int_{\psi-\frac{\pi}{2}}^{\theta}\frac{\cos (u)g(u)}{\sin (\psi-u)}\dif u\label{(2.1)},
\end{align}
and

\begin{align}
y(\theta)&=y\left(\psi-\frac{\pi}{2}\right)+\int_{\psi-\frac{\pi}{2}}^{\theta}\frac{\sin (u)}{\kappa}\dif u\nonumber\\
&=y\left(\psi-\frac{\pi}{2}\right)+\int_{\psi-\frac{\pi}{2}}^{\theta}\frac{\sin (u)g(u)}{\sin (\psi-u)}\dif u\label{(2.2)}.
\end{align}

For convenience, we will write $x_0=x\left(\psi-\frac{\pi}{2}\right)$ and $y_0=y\left(\psi-\frac{\pi}{2}\right)$. We note that these curves are asymptotic to straight lines in the $v$ direction as $\theta\to\psi$ or $\theta\to\psi-\pi$. To see this, we take inner products and have
\begin{align*}
\langle (x(\theta),y(\theta)),v\rangle=&\langle(x(\theta),y(\theta)),(\cos \psi,\sin\psi)\rangle\\
=&x_0\cos\psi+y_0\sin\psi+\int_{\psi-\frac{\pi}{2}}^{\theta}\frac{\left(\cos (u)\cos(\psi)+\sin(u)\sin(\psi)\right)g(u)}{\sin(\psi-u)}\dif u\\
=&x_0\cos\psi+y_0\sin\psi+\int_{\psi-\frac{\pi}{2}}^\theta\cot (\psi-u)g(u)\dif u\\
\geq&x_0\cos\psi+y_0\sin\psi-(\min g)\log (\sin(\psi-\theta)).
\end{align*}
Then note that as $\theta\uparrow\psi$ or $\theta\downarrow(\psi-\pi)$, we have that $\log (\sin(\psi-\theta))\downarrow-\infty$.\\

We can also use (2) and (3) to show that the translator lives in a slab of a given width, which we will denote $w_g^{v}$. Taking $v^{\perp}=(\sin \psi,-\cos,\psi)$, we have that
\begin{align*}
\langle (x(\theta),y(\theta)),v^{\perp}\rangle=&\langle(x(\theta),y(\theta)),(\sin \psi,-\cos\psi)\rangle\\
=&x_0\sin\psi- y_0\cos\psi+\int_{\psi-\frac{\pi}{2}}^{\theta}\frac{\left(\cos (u)\sin(\psi)-\sin(u)\cos(\psi)\right)g(u)}{\sin(\psi-u)}\dif u\\
=&x_0\cos\psi+y_0\sin\psi+\int_{\psi-\frac{\pi}{2}}^\theta g(u)\dif u.\\
\end{align*}
Performing similar caclulations on $-v^{\perp}$ and combining, we find that
\begin{equation}w_g^{\psi}=\int_{\psi-\pi}^{\psi}g(u)\dif u.\label{2.3}\end{equation}

Note that if $v=e_2$, we have $\psi=\frac{\pi}{2}$.  (1) and (2) then simplify to\\

\[x(\theta)=x(0)+\int_{0}^{\theta} g(u)\dif u\hspace{1in}y(\theta)=y(0)+\int_{0}^{\theta}\tan (u)g(u)\dif u.\]
\\
In the case where $v=e_2$, we will denote $w_g^{e_2}$ by $w_g$. Note that for $g\equiv 1$, and $x(0)=y(0)=0$, our construction recovers the famous grim reaper solution for CSF.\\

\begin{proposition}
Given a smooth, strictly positive function $g:S^1\to\rr$, and vector $v\in S^1$, there exists a translating solution to ACSF that travels in the direction of $v$ with speed $1$. Furthermore, up to translations in $\rr^2$ and time, this translator is unique.
\end{proposition}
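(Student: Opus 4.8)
The plan is to read off the (forced) curvature of a translator, build the curve by quadrature, verify it is a complete embedded strictly convex curve yielding an eternal solution, and then obtain uniqueness from the fact that the curvature is forced.

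\textbf{Existence.} The computation preceding the statement shows that any solution translating with unit speed in the direction $v=(\cos\psi,\sin\psi)$ must satisfy $g\kappa=-\langle v,\mathsf N\rangle=\sin(\psi-\theta)$, i.e.\ $\kappa(\theta)=\sin(\psi-\theta)/g(\theta)$, and $\kappa>0$ is only consistent with tangent angles in the open interval $(\psi-\pi,\psi)$. I would then turn this around and define $\gamma\colon(\psi-\pi,\psi)\to\rr^2$ by the quadrature formulas \eqref{(2.1)}, \eqref{(2.2)}, with any fixed $x_0,y_0$. Since $g$ is smooth and positive and $\sin(\psi-\cdot)$ is smooth and bounded away from $0$ on compact subintervals, the integrands are smooth there, so $\gamma$ is a smooth immersion with $|\gamma'(\theta)|=1/\kappa(\theta)>0$ and tangent angle exactly $\theta$; in particular it is strictly convex. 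By \eqref{2.3} and the slab computation above it, $\langle\gamma(\theta),v^\perp\rangle$ equals a constant plus $\int_{\psi-\pi/2}^{\theta}g(u)\,du$, hence is strictly increasing in $\theta$ and sweeps out an interval of length $w_g^\psi=\int_{\psi-\pi}^{\psi}g$; thus $\gamma$ is a graph over the $v^\perp$-axis, so it is embedded and lies in a slab of width $w_g^\psi$ parallel to $v$ and no smaller. The estimate $\langle\gamma(\theta),v\rangle\geq x_0\cos\psi+y_0\sin\psi-(\min g)\log\sin(\psi-\theta)$ derived above gives $\langle\gamma(\theta),v\rangle\to+\infty$ as $\theta\downarrow\psi-\pi$ or $\theta\uparrow\psi$, so $\gamma$ is complete and properly embedded, asymptotic to two lines parallel to $v$. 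Finally, the translating family $t\mapsto\gamma+tv$ has normal velocity $\langle v,\mathsf N\rangle=-\sin(\psi-\theta)=-g\kappa$ at every point, so it solves ACSF geometrically; composing with the time-dependent tangential reparametrization that absorbs the tangential part $\cos(\psi-\theta)\mathsf T$ of $v$ produces an honest eternal solution of $\partial_tX=-g\kappa\mathsf N$ translating with unit speed in the direction $v$.

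\textbf{Uniqueness.} Let $X$ be any translator with unit speed in the direction $v$. It cannot be compact, since by \eqref{(0.1)} its enclosed area would strictly decrease while a rigid translation preserves area; hence each time slice is a complete, non-compact, strictly convex curve, parametrizable by tangent angle. As above, $X_t$ has normal part $\langle v,\mathsf N\rangle\mathsf N$, so $g\kappa=\sin(\psi-\theta)$ on the (connected) set of tangent angles realized; $\kappa>0$ confines this set, after normalizing, to $(\psi-\pi,\psi)$ (a full wrap-around is excluded by $\kappa>0$ and by non-compactness), and completeness forces it to be all of $(\psi-\pi,\psi)$: if it stopped at some interior angle $a$, then since $\sin(\psi-a)\neq0$ makes the integrands in \eqref{(2.1)}, \eqref{(2.2)} bounded near $a$, the curve would converge to a finite endpoint, contradicting that the solution has no boundary. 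So the curvature as a function of tangent angle is exactly $\sin(\psi-\theta)/g(\theta)$ on $(\psi-\pi,\psi)$, and integrating the Frenet--Serret equations determines each time slice up to the constant of integration, i.e.\ up to a translation in $\rr^2$. Comparing with $\gamma$ at one time and using that $X$ translates with unit speed in the direction $v$, we conclude that $X$ equals $\{\gamma+tv\}$ up to translations in $\rr^2$ and in time.

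\textbf{Main obstacle.} The bulk of the work is the bookkeeping in the existence step showing that $\gamma$ is complete and properly embedded (not merely immersed) and yields a genuine eternal solution, but this rests on asymptotic estimates already recorded in the text. The two genuinely delicate points I anticipate are (a) handling the tangential reparametrization cleanly when passing between ``the image translates'' and the PDE $\partial_tX=-g\kappa\mathsf N$, and (b) ruling out, in the uniqueness argument, a hypothetical translator whose tangent angles fill only a proper subinterval of $(\psi-\pi,\psi)$ --- this is exactly where completeness of the solution is essential.
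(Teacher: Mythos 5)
Your proof is correct and takes essentially the same approach as the paper: the preceding computation forces $\kappa(\theta)=\sin(\psi-\theta)/g(\theta)$, existence comes from the quadrature formulas \eqref{(2.1)}--\eqref{(2.2)} translated in the $v$ direction, and uniqueness comes from the fact that the curvature (hence, by Frenet--Serret, the curve up to translation) is determined. The paper's own proof is a one-sentence assertion of exactly this; your write-up merely supplies the verification details (embeddedness, completeness, exclusion of compact translators, tangential reparametrization) that the paper leaves implicit.
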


\begin{proof}
if we let $\Gamma_{g,v}$ be the initial curve defined by (2) and (3) with $v-(\cos \psi,\sin \psi)$, the family of curves given by\\

\[X(\cdot,t)=\Gamma_{g,v}+tv\]
\\
is a translating solution to ACSF, and any translating solution must be of this form, up to the addition of a constant multiple of $v$ and choice of initial points on our curve.
\end{proof}

In what follows we will mainly concern ourselves with translators in the $e_2$ and $-e_2$ directions, though by an orthogonal change of coordinates, the results will follow for translators in the $v$ and $-v$ directions as well.\\

Given a translator moving in the $e_2$ direction at speed $1$, we have by (4) that
\begin{equation}w_g=\int_{-\frac{\pi}{2}}^{\frac{\pi}{2}}g(u)\dif u\label{(2.4)}.\end{equation}
Given a vertical slab of arbitrary width $w$, we can find a translator moving in the $e_2$ direction of width $w$ by having it move in the $e_2$ direction at speed $\frac{w_g}{w}$. This gives us the following.

\begin{corollary}
Given the hypotheses of the previous proposition, and a slab of width $w$ parallel to a vector $v\in S^1$, there exists a unique translating solution to ACSF that lies in that slab and no smaller slab. Furthermore, this translator travels with speed $\frac{w^v_g}{w}$
\end{corollary}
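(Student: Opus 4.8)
The plan is to obtain a translator of prescribed width by parabolically rescaling the unit-speed translator of Proposition 2.1, and to extract uniqueness from the explicit curvature profile of an arbitrary translator.

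For existence, the key observation is that ACSF with a fixed factor $g$ is invariant under the parabolic rescaling $X(u,t)\mapsto \lambda X(u,\lambda^{-2}t)$ for any $\lambda>0$: one has $\partial_t\bigl(\lambda X(u,\lambda^{-2}t)\bigr)=\lambda^{-1}(\partial_tX)(u,\lambda^{-2}t)$, the curvature scales by $\lambda^{-1}$, and --- crucially --- the unit normal $\mathsf{N}$ is unchanged, so the anisotropic factor $g(\mathsf{N})$ is evaluated at the same direction. Applying this to the unit-speed translator $X(\cdot,t)=\Gamma_{g,v}+tv$ of Proposition 2.1, whose minimal slab parallel to $v$ has width $w_g^v$ by \eqref{2.3}, produces the family $\lambda\Gamma_{g,v}+\lambda^{-1}t\,v$: a translator travelling in the direction $v$ with speed $\lambda^{-1}$, whose minimal slab parallel to $v$ has width $\lambda w_g^v$. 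Choosing $\lambda=w/w_g^v$ gives a translator lying in a slab parallel to $v$ of width exactly $w$ and in no smaller one, with speed $w_g^v/w$.

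For uniqueness, I would reprise the derivation leading to $\kappa(\theta)=\sin(\psi-\theta)/g(\theta)$, but allowing an arbitrary speed $c>0$: a family translating with velocity $cv$ satisfies $cv=-g\kappa\mathsf{N}+\Phi\mathsf{T}$, so pairing with $\mathsf{N}$ forces $\kappa(\theta)=c\sin(\psi-\theta)/g(\theta)$, and integrating $\mathsf{T}/\kappa$ as in \eqref{(2.1)}--\eqref{(2.2)} recovers the curve, uniquely up to a translation of $\rr^2$. Repeating the slab computation with this $\kappa$ --- every integrand there picks up a factor $1/c$ --- shows the minimal slab parallel to $v$ has width $w_g^v/c$; since $\langle X(\theta),v^\perp\rangle$ has $\theta$-derivative $g(\theta)/c>0$, its image is the full interval of that length, so the width is exactly $w_g^v/c$. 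Hence ``lies in a slab parallel to $v$ of width $w$ and in no smaller one'' forces $c=w_g^v/w$, and the translator is then determined up to translation. (The same argument with $-v$ in place of $v$ produces the companion translator travelling in the $-v$ direction with speed $w_g^{-v}/w$; together these account for the two translators of the main theorem.)

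The only genuinely substantive point is the scaling invariance of ACSF under $X\mapsto\lambda X(\cdot,\lambda^{-2}t)$ together with its effect on the translation velocity and on the slab width; once that is in hand, both halves of the corollary are bookkeeping on top of Proposition 2.1 and the width formula \eqref{2.3}. One should only take mild care that ``width $w$ and no smaller'' means the width is \emph{exactly} $w$, which is why the monotonicity of $\theta\mapsto\langle X(\theta),v^\perp\rangle$ is recorded above.
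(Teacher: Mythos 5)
Your proposal is correct and follows essentially the same route as the paper: the paper obtains the width-$w$ translator by running the Proposition 2.1 construction at speed $w_g^v/w$, which is exactly your parabolic rescaling by $\lambda=w/w_g^v$ in different clothing, and uniqueness in both treatments reduces to the fact that a translator with velocity $cv$ must have $\kappa(\theta)=c\sin(\psi-\theta)/g(\theta)$ and hence minimal slab width $w_g^v/c$, forcing $c=w_g^v/w$. Your explicit check that the width is exactly $w_g^v/c$, via the monotonicity of $\theta\mapsto\langle X(\theta),v^{\perp}\rangle$, is a detail the paper leaves implicit.
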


\end{section}

\begin{section}{Compact Solutions}

This section is dedicated to proving the following Theorem.\\

\begin{theorem}
Given a smooth,strictly positive function $g:S^1\to\rr$, and a slab of width $w$ parallel to a vector $v\in S^1$, there exists a compact ancient solution to ACSF with respect to $g$ that lies in the slab and no smaller slab.
\end{theorem}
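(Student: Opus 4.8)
The plan is to obtain the ancient solution as a limit of a sequence of \emph{old-but-not-ancient} solutions, using as barriers the two width-$w$ translators from Corollary 2.2. Write $v=(\cos\psi,\sin\psi)$ and let $\mathcal T^+$ be the translator moving in the direction $v$ that lies in a slab parallel to $v$ of width $w$ and no smaller; it is $\cup$-shaped (its cap points in the $-v$ direction) and has speed $c_+=w_g^v/w$. Let $\mathcal T^-$ be the analogous translator moving in the direction $-v$; it is $\cap$-shaped (cap pointing in the $+v$ direction) with speed $c_-=w_g^{-v}/w$, where $w_g^{-v}=\int_\psi^{\psi+\pi}g$, so that $c_++c_-=\frac1w\int_{S^1}g$. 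Position $\mathcal T^-$ so its cap starts at $\tfrac d2 v$ and $\mathcal T^+$ so its cap starts at $-\tfrac d2 v$, for a parameter $d>0$ to be chosen large, and for $t\le 0$ let $\Omega_t$ be the (convex, bounded) region lying on the $-v$ side of $\mathcal T^-(\cdot,t)$ and on the $+v$ side of $\mathcal T^+(\cdot,t)$. As $t\to-\infty$ the two caps separate and $\Omega_t$ exhausts the open slab of width $w$; moreover a direct computation gives $A(\Omega_t)=(\int_{S^1}g)\abs{t}+wd-P+o(1)$ as $t\to-\infty$, where $P$ is the finite integral across the slab of the two logarithmically divergent but integrable translator profiles. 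The identity $w(c_++c_-)=\int_{S^1}g$ — the integrand of the area evolution \eqref{(0.1)} — is what makes this bookkeeping close.

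Fix $d$ with $wd>P+\int_{S^1}g+1$. For each large $k$ choose a smooth strictly convex body $D_k\subseteq\Omega_{-k}$ with $A(D_k)=(\int_{S^1}g)(k+1)$ (possible since $A(\Omega_{-k})>(\int_{S^1}g)(k+1)$ once $k$ is large), and let $X_k$ be the ACSF solution with $X_k(\cdot,-k)=\partial D_k$. By the avoidance principle applied against $\mathcal T^\pm$, $X_k(\cdot,t)\subseteq\Omega_t$, so $X_k$ stays in the slab of width $w$, and $X_k$ stays strictly convex by the strong maximum principle for $\kappa$. By \eqref{(0.1)}, $A(X_k(\cdot,t))=(\int_{S^1}g)(1-t)$, so — using that a compact convex solution of ACSF shrinks to a point exactly as its enclosed area tends to $0$ — $X_k$ exists precisely on $[-k,1)$ and has the fixed positive area $\int_{S^1}g$ at $t=0$. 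Now fix $m$ and let $k\to\infty$: for $t\in[-m,1)$ all the $X_k(\cdot,t)$ lie in the fixed bounded convex set $\Omega_{-m}$, and at $t=0$ they have fixed positive area, so they subconverge in Hausdorff distance to a genuine convex body; interior parabolic estimates (the solutions exist on a uniform time window around any fixed time, inside a fixed bounded set) promote this to $C^\infty_{\mathrm{loc}}$ convergence, and a diagonal argument over $m$ produces $X_\infty$: a compact, convex, ancient solution of ACSF on $(-\infty,1)$, strictly convex by the strong maximum principle, with $X_\infty(\cdot,t)\subseteq\Omega_t\subseteq\{\text{slab of width }w\}$ for every $t$.

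Finally I would rule out containment in any narrower slab parallel to $v$. If $X_\infty(\cdot,t)$ were contained in a slab of width $w'<w$ parallel to $v$ for all $t$, then since $X_\infty(\cdot,t)\subseteq\Omega_t$ its extent in the $v$ direction would be at most that of $\Omega_t$, namely $d+(c_++c_-)\abs{t}=d+\frac1w(\int_{S^1}g)\abs{t}$; hence $A(X_\infty(\cdot,t))\le w'(d+\frac1w(\int_{S^1}g)\abs{t})$. But by \eqref{(0.1)}, $A(X_\infty(\cdot,t))=A(X_\infty(\cdot,0))+(\int_{S^1}g)\abs{t}$ with $A(X_\infty(\cdot,0))=\int_{S^1}g>0$. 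Letting $\abs{t}\to\infty$ forces $\frac{w-w'}{w}(\int_{S^1}g)\abs{t}$ to stay bounded, which is absurd. So the smallest slab parallel to $v$ containing $X_\infty$ for all time has width exactly $w$, as required. The main obstacle in all of this is engineering the approximating sequence so that its limit is simultaneously nondegenerate (a genuine compact curve, not a point) and maximally wide; the device that achieves both is the pair of width-$w$ translator barriers together with the identity $w(c_++c_-)=\int_{S^1}g$, which pins both the area and the extinction time of $\Omega_t$. The remaining ingredients — the avoidance principle for a compact curve against a complete translator, preservation of strict convexity, the interior estimates, and the extinction-at-zero-area property of convex ACSF — are standard.
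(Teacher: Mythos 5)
Your overall strategy is the same as the paper's --- trap a sequence of old-but-not-ancient compact solutions between the width-$w$ translator moving in the $v$ direction and the width-$w$ translator moving in the $-v$ direction, and extract a limit --- but your implementation of the approximating sequence is genuinely different and in some respects cleaner. The paper builds the $R$-th initial curve as the boundary of the convex region bounded by the two translator timeslices $G^{\pm}_{-R}$ (which forces it to invoke Andrews' existence theorem to flow the resulting corners) and normalizes by extinction time, proving explicit bounds on the enclosed area, the vertical and horizontal reach, and finally the curvature. You instead use the translators purely as outer barriers, seed the flow with arbitrary smooth strictly convex bodies of prescribed area inside the lens $\Omega_{-k}$, and let the identity $w(c_++c_-)=\int_{S^1}g$ do the normalization; your area-growth argument for ``no smaller slab'' is a clean substitute for the paper's horizontal-reach estimate. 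All of that is sound.

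The genuine gap is the compactness step. Hausdorff subconvergence of the convex bodies $X_k(\cdot,t)$ together with the area lower bound does not by itself yield $C^\infty_{\mathrm{loc}}$ convergence to a smooth solution of ACSF: the equation is uniformly parabolic only where one has two-sided a priori control on the geometry, so a curvature estimate uniform in $k$ must be proved before ``interior parabolic estimates'' can be invoked. This is precisely the technical heart of the paper's construction: the Harnack-type inequality (monotonicity of $\kappa\sqrt{\tau-\alpha}$, Proposition 1.2) is integrated against the support function to give the uniform bound $\kappa(\theta,t)\le 2\eta(\theta,t)/(-g(\theta)\,t)$ away from the extinction time (Proposition 3.4(v)), and only then do the parabolic bootstrap and the diagonal argument go through. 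You can fill the gap either by importing that argument verbatim (it uses nothing about the particular initial data), or by making your route honest: the area lower bound and the containment in a fixed bounded set give, via John's lemma, an inscribed disk of definite radius, hence a uniform Lipschitz graphical representation of each $X_k(\cdot,t)$ over suitable chords, after which the standard one-dimensional quasilinear interior estimates apply --- but one of these two arguments must actually appear. (You also lean on the fact that a convex ACSF solution persists until its enclosed area vanishes; the paper relies on the same background result, so I only note that it deserves a citation.)
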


By the reasoning in previous sections, it suffices to prove this theorem for $v=e_2$ and $w=w_g$, with $w_g$ as in (4).\\

Given a smooth function $g:S^1\to\mathbb{R}^{>0}$, and using the construction developed in section 2, we will denote by $G^+$ the $t=0$ timeslice of the translator that moves at speed 1 in the $e_2$ direction as in Proposition 2.1, with $y(0)=0$ and with $x(0)$ chosen so that the curve is `centered' about the $y$-axis, i.e, $x(0)$ will be such that

\begin{equation}-\left(x(0)-\int_{0}^{\frac{-\pi}{2}}g(u)\dif u\right)=x(0)+\int_{0}^{\frac{\pi}{2}}g(u)\dif u.\label{3.1}\end{equation}

We will denote by $\{G^+_t\}_{t\in(-\infty,\infty)}$ the translating solution to ACSF that moves at speed $1$ in the $e_2$ direction such that $G^+_0=G^+$, and as above, we denote by $w_g$ the width of this translator.\\

The idea of constructing an ancient solution is as follows. We construct an appropriate sequence of ``old-but-not-ancient" solutions (these are flows that live in longer and longer time intervals) and show that one can extract a limit. In \cite{betal} the initial curves of the sequence were constructed by considering timeslices of the translating solutions further and further in the past and reflecting these across the $x$-axis to create a compact, convex curve. As our flow depends heavily on the direction of the normal vector, this method will not work for us, and we must adapt our procedure in various way to construct the sequence of flows. After constructing these so-called `old-but-not-ancient' solutions, we wish to take a limit of the corresponding flows. We will show that this limit does, in fact, exist, and that it is an ancient solution to ACSF. 

To construct the initial curves of the ``old-but-not-ancient" soluctions,  instead of reflecting our translator we will join together translators moving in the $e_2$ direction with those moving in the $-e_2$ direction. One issue that immediately arises is that those translators may lie in slabs of different width. In order to resolve this issue, we require that our translator in the $-e_2$ direction moves at a speed $\sigma$ with $\sigma$ chosen so that it lives in a slab of width $w_g$, and thus the curves match up appropriately. Note that in the construction in section 2 this corresponds to $v=\sigma(\cos \frac{3\pi}{2},\sin\frac{3\pi}{2})$, and so by (1)  and (2), we have the following expression for the initial curve of the translator that moves in the $-e_2$ direciton:\\

\[x(\theta)=x(\pi)-\frac{1}{\sigma}\int_{\pi}^{\theta}g(u)\dif u\hspace{0.1in}y(\theta)=y(\pi)-\frac{1}{\sigma}\int_{\pi}^{\theta}g(u)\tan(u)\dif u.\]
\\
We then have that the width of this translator is $\frac{1}{\sigma}\int_{\frac{\pi}{2}}^{\frac{3\pi}{2}}g(u)\dif u$, and we can solve for $\sigma$ to determine the appropriate speed. In particular, we pick $\sigma$ so that\\

\[\frac{1}{\sigma}\int_{\frac{\pi}{2}}^{\frac{3\pi}{2}}g(u)\dif u=\int_{-\frac{\pi}{2}}^{\frac{\pi}{2}}g(u)\dif u.\]
\\
Similar to the above, we will denote by $G^-$ the initial curve for the translator that moves at speed $\sigma$ in the $-e_2$ direction with $y(0)=0$ and $x(0)$ chosen so that the curve is centered as in (6), and we have $\{G^{-}_t\}_{t\in(-\infty,\infty)}$ as the corresponding translating solution to ACSF in the $-e_2$ dirction with speed $\sigma$ and with $G^{-}_0=G^{-}$.\\

Now, for $R>0$ we wish to construct a compact curve by taking some combination of $G^+_{-R}$ and $G^{-}_{-R}$. Note, however, that even though we have constructed the translators to have the same width, we have no guarantee that they will intersect the $x$-axis at the same points for any given $R$ (and, indeed, they almost certainly won't). So we cannot just take the intersections of our timeslice curves with the respective half-planes. Instead, we will take the union of both curves, and discard the pieces of the curves after their two intersections. We will denote the resulting compact curve by $G^R$. So $G^R$ is the boundary of the compact convex region bounded by $G_{-R}^{\pm}$.\\

We further note that while the resulting curve is not smooth, as a consequence of a theorem by Andrews \cite{andrews}, there exists a smooth solution to ACSF whose initial curve is $G^R$.  We will translate by time so that for every $R$ the flow becomes extinct at $t=0$, and will denote these flows by
\begin{equation}
\{G^R_t\}_{t\in(t_R,0)},\label{3.2}
\end{equation}
with\\

\[\lim_{t\downarrow t_R} G^R_t=G^R,\]
\\
with the limit being in the $C^{0}$-topology.

\begin{proposition}
Let $A_R(0)$ be the area bounded by the curve $G^R$. Then there exists a constant $C$, depending only on $g$ and $\sigma$, such that $ w_g(R+\sigma R)-C\leq A_R(0)\leq w_g(R+\sigma R)$.
\end{proposition}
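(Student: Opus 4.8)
The plan is to realize $G^R$ as the boundary of the lens $K_R$ trapped between two graphs and to reduce the area to a one--dimensional integral. By construction, $G^+_{-R}=G^+-Re_2$ is the graph of a \emph{convex} function $y=\varphi(x)-R$ over $(-w_g/2,w_g/2)$ with $\varphi\ge 0$ (the lowest point of $G^+$ lies at height $0$) and $\varphi(x)\to+\infty$ as $x\to\pm w_g/2$; likewise $G^-_{-R}=G^-+\sigma Re_2$ is the graph of a \emph{concave} function $y=\eta(x)+\sigma R$ over the same interval with $\eta\le 0$ (the highest point of $G^-$ lies at height $0$) and $\eta(x)\to-\infty$ as $x\to\pm w_g/2$. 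Here $\varphi$ and $\eta$ depend only on $g$ and $\sigma$, and both curves are asymptotic to the lines $x=\pm w_g/2$ because they are centered (condition (6)) and have slab width $w_g$.

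Since $\varphi-\eta$ is convex and blows up at both ends, $K_R=\{\,\varphi(x)-R<y<\eta(x)+\sigma R\,\}$ projects onto an interval $(x_-,x_+)\subseteq(-w_g/2,w_g/2)$ determined by $\varphi(x_\pm)-\eta(x_\pm)=(1+\sigma)R$, and integrating vertical fiber lengths gives
\begin{align*}
A_R(0)&=\int_{x_-}^{x_+}\big[(1+\sigma)R+\eta(x)-\varphi(x)\big]\dif x\\
&=w_g(R+\sigma R)-(1+\sigma)R\big(w_g-(x_+-x_-)\big)-\int_{x_-}^{x_+}\big(\varphi(x)-\eta(x)\big)\dif x .
\end{align*}
Both subtracted terms are nonnegative --- the first because $x_\pm\in(-w_g/2,w_g/2)$, the second because $\varphi\ge 0\ge\eta$ --- which already yields the upper bound $A_R(0)\le w_g(R+\sigma R)$.

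For the lower bound it suffices to bound each subtracted term above by a constant depending only on $g$ and $\sigma$. The second is at most $\int_{-w_g/2}^{w_g/2}(\varphi-\eta)\dif x$, which is finite because, by the explicit integral formulas for $G^\pm$ together with $0<\min g\le g\le\max g$, the functions $\varphi$ and $-\eta$ blow up only \emph{logarithmically} at $\pm w_g/2$, and a logarithmic singularity is integrable. That same estimate --- say $\varphi(x)-\eta(x)\le a(-\log(w_g/2-|x|))+b$ near the endpoints, with $a>0$, $b$ depending only on $g$ and $\sigma$ --- also controls the first subtracted term: evaluating at $x_+$, where $\varphi(x_+)-\eta(x_+)=(1+\sigma)R$, forces $w_g/2-x_+\le e^{b/a}e^{-(1+\sigma)R/a}$, and symmetrically $x_-+w_g/2\le e^{b/a}e^{-(1+\sigma)R/a}$, so $(1+\sigma)R\big(w_g-(x_+-x_-)\big)\le 2(1+\sigma)R\,e^{b/a}e^{-(1+\sigma)R/a}\to 0$ and is in particular bounded over all $R$ for which $G^R$ is defined. (For the bounded range of small $R$ not yet in this endpoint regime, both $A_R(0)$ and $R$ are bounded, so the inequality holds after enlarging the constant.) Taking $C$ to be the sum of these two bounds gives $A_R(0)\ge w_g(R+\sigma R)-C$.

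The only step that is not bookkeeping is the logarithmic upper bound on $\varphi-\eta$ near the edges of the slab; everything else follows at once from the graph description. That bound is essentially the asymptotics computation already carried out for the translators --- there one shows the height is \emph{at least} a constant times $-\log(\text{distance to the edge})$ using $\min g$, and here one wants the matching upper bound using $\max g$ --- so I expect it to be short. Once it is available, the crossing points $x_\pm$ are seen to approach the slab walls exponentially fast in $R$, which is exactly what makes the loss a fixed constant rather than something growing with $R$.
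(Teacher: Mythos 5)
Your proof is correct, and it rests on the same essential fact as the paper's: the translators approach the slab walls only logarithmically, so the gap between each translator and the lines $x=\pm\frac{w_g}{2}$ has finite area depending only on $g$ and $\sigma$ (the paper computes this gap explicitly in the $\theta$-parametrization as $\int_{-\pi/2}^{\pi/2}g(\theta)\int_0^\theta g(u)\tan u\,\dif u\,\dif\theta\le \enVert{g}_{L^\infty}^2\pi\log 2$, which is exactly your $\int(\varphi-\eta)\,\dif x<\infty$). The difference is organizational: the paper circumscribes $G^R$ by the rectangle $P$ and subtracts the two full gap regions, so the entire deficit $|P|-A_R(0)$ is absorbed into those two integrals at once; you instead integrate fiber lengths only over $(x_-,x_+)$, which forces you to control the extra term $(1+\sigma)R\bigl(w_g-(x_+-x_-)\bigr)$ separately via the exponential approach of the crossing points to the walls. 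That extra estimate is correct (and it is a genuinely useful quantitative fact --- it is morally the content of Proposition 3.3 on the horizontal reach), but it is not needed for this proposition: since the fiber-length integrand $(1+\sigma)R+\eta-\varphi$ is nonpositive outside $(x_-,x_+)$, extending your integral to all of $(-\frac{w_g}{2},\frac{w_g}{2})$ only decreases it, giving $A_R(0)\ge w_g(1+\sigma)R-\int_{-w_g/2}^{w_g/2}(\varphi-\eta)\,\dif x$ directly and collapsing your argument to the paper's.
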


\begin{proof}
Let $P$ be the rectangle $\{\envert{x}\leq w_g\}\times \{-R\leq y \leq \sigma R\}$. Note that we have $G_R\subset P$ and thus $A_R(0)\leq w_g(R+\sigma R)$.\\

To obtain the lower bound, we will estimate the areas between the two translators and the vertical edges of $P$. For the area between the curve $G^+_{-R}$ and the rectangle, we note that this area is smaller than the area between $G^+$ and the two lines defined by $x=\pm\frac{w_g}{2}$ in the halfplane $\{y\geq 0\}$. This is given by\\

\[\int_{-\frac{\pi}{2}}^{\frac{\pi}{2}}g(u)\int_{0}^{\theta}g(u)\tan (u)\dif u\dif \theta\]
\\
and we have
\begin{align*}
\int_{-\frac{\pi}{2}}^{\frac{\pi}{2}}g(u)\int_{0}^{\theta}g(u)\tan (u)\dif u\dif \theta&\leq\enVert{g}_{L^{\infty}\left(S^1\right)}^2\int_{\frac{-\pi}{2}}^{\frac{\pi}{2}}\int_{0}^{\theta}\tan (u)\dif u\dif\theta\\
&=\enVert{g}^2_{L^{\infty}\left(S^1\right)}\pi\log 2.
\end{align*}

A similar calculation for the area between $G^-_{-R}$ and the rectangle gives us an upper bound of $\sigma^{-2}\enVert{g}_{L^{\infty}\left(S^1\right)}^2\pi\log 2$, and this gives us our desired lower bound for $A_R(0)$ with $C=\enVert{g}_{L^{\infty}\left(S^1\right)}^2(1+\sigma^{-2})\pi\log 2.$
\end{proof}

We define the horizontal reach $h_R$  of the curve $G^R$ to be the distance between the two vertical supporting lines of the curve. That is, $h_R$ is such that $G_R$ is contained in a vertical slab of width $h_R$ but in no thinner vertical slab.

\begin{proposition}
There exists a constant $C$, dependent on $g$ and $\sigma$, such that for all $R\in (-\infty,0)$ we have\\

\[h_R\geq w_g-2C\arcsin \left(\frac{C}{R}\right).\]
\end{proposition}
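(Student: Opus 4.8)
The plan is to read the horizontal reach straight off the area estimate of Proposition 3.2, using only the elementary fact that a convex region trapped between two pairs of parallel lines has area at most the product of the two widths. The first step is to place $G^R$ in a thin horizontal strip. By construction $G^+_{-R}=G^+-Re_2$ is the graph of a convex function of $x$ (on the interval of width $w_g$ cut out by the two vertical asymptotes of $G^+$) whose minimum value is $-R$, attained where $\theta=0$; hence the convex region it bounds is contained in $\{y\ge -R\}$. Symmetrically, $G^-_{-R}=G^-+\sigma Re_2$ is the graph of a concave function with maximum value $\sigma R$, so the convex region it bounds lies in $\{y\le \sigma R\}$. Since the region $\Omega_R$ enclosed by $G^R$ is, by definition, the intersection of these two convex regions, $\Omega_R$ lies in the strip $\{-R\le y\le \sigma R\}$ of width $(1+\sigma)R$; and by the definition of $h_R$ it also lies in a vertical slab of width $h_R$. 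Therefore $\Omega_R$ is contained in an $h_R\times(1+\sigma)R$ rectangle, giving
\[A_R(0)=\abs{\Omega_R}\le h_R\,(1+\sigma)R.\]

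Combining this with the lower bound $A_R(0)\ge w_g(1+\sigma)R-C_0$ from Proposition 3.2, where $C_0=C_0(g,\sigma)$, and dividing by $(1+\sigma)R$, yields
\[h_R\ge w_g-\frac{C_0}{(1+\sigma)R},\]
which is already of the stated shape. To rewrite it in the form $w_g-2C\arcsin(C/R)$ one uses $\arcsin t\ge t$ on $[0,1]$: choosing $C=C(g,\sigma)$ large enough that $2C^2\ge C_0/(1+\sigma)$ gives $2C\arcsin(C/R)\ge 2C^2/R\ge C_0/((1+\sigma)R)$ whenever $R\ge C$, while for smaller $R$ the asserted inequality can be arranged to be vacuous, and in any case only the regime of large $R$ is relevant afterwards.

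I do not expect a genuine difficulty here; the one place calling for care is the strip inclusion $\Omega_R\subset\{-R\le y\le \sigma R\}$, where one must note that the arcs of $G^\pm_{-R}$ discarded in forming $G^R$ are irrelevant — precisely because $\Omega_R$ is defined as the intersection of the two full convex regions bounded by $G^+_{-R}$ and $G^-_{-R}$ — together with the routine bookkeeping that keeps the final constant dependent only on $g$ and $\sigma$, which is automatic once $C_0$ is. (If one preferred the $\arcsin$ to appear from geometry rather than be inserted at the end, one could instead bound the two arms of $G^R$ from the inside by circular arcs of radius of order $(1+\sigma)R$ spanning the strip, but the area comparison above is shorter and self-contained.)
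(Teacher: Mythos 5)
Your proof is correct but takes a genuinely different route from the paper's. The paper works directly with the translator parametrization: writing $\theta_R^\pm$ for the tangent angles at which $G^+_{-R}$ meets the $x$-axis, it uses $R=\int_0^{\theta_R^+}g(u)\tan u\,\dif u\le \Vert g\Vert_{L^\infty}\sec\theta_R^+$ to force $\sec\theta_R^\pm\ge R/\Vert g\Vert_{L^\infty}$, and then measures the horizontal separation of the crossing points as $\int_{\theta_R^-}^{\theta_R^+}g(u)\,\dif u\ge w_g-2\Vert g\Vert_{L^\infty}\arcsin\left(\Vert g\Vert_{L^\infty}/R\right)$ (with the analogous computation for $G^-_{-R}$); the $\arcsin$ arises organically there, and the argument additionally locates a near-maximal cross-section on the $x$-axis. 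You instead sandwich the area: $\Omega_R$ lies in an $h_R\times(1+\sigma)R$ rectangle, so $w_g(1+\sigma)R-C_0\le A_R(0)\le h_R(1+\sigma)R$, giving $h_R\ge w_g-C_0/((1+\sigma)R)$, with the $\arcsin$ inserted cosmetically at the end via $\arcsin t\ge t$. Your strip containment is justified (it is exactly the paper's observation that $\ell_R(0)=(1+\sigma)R$), there is no circularity since the lower bound in Proposition 3.2 is obtained without reference to $h_R$, and in fact your mechanism is precisely the one the paper itself deploys later, in Proposition 3.4(iv), to bound $h_R(t)$ at times $t>t_R$ via $A_R(t)\le h_R(t)\ell_R(t)$. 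What your version buys is brevity and independence from the explicit translator integrals; what it gives up is the information about where the width is nearly achieved, which the paper does not subsequently use. The one blemish --- that $\arcsin(C/R)$ is undefined when $\abs{C/R}>1$, so the estimate is only meaningful for $R$ large --- is equally present in the paper's own statement and proof, so it is not a gap particular to your argument.
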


\begin{proof}
What we will find are inner bounds for the intersection points of $G^{+}_{-R}$ and $G^{-}_{-R}$ with the $x$-axis. Note that even though there is no reason to believe that the intersection points are symmetric about the $y$-axis, our inner bounds will be. Thus, the bounds for $G^{+}_{-R}$ and $G^{-}_{-R}$ will necessarily be nested, (one set of bounds will lie within the other set) and the innermost set will therefore serve as a bound for all four intersection points. The supporting hyperplanes are certainly further from the $y$-axis as the corresponding intersection point of at least one of $G^{+}_{-R}$ and $G^{-}_{-R}$ (and exactly one unless the two curves meet at the $x$-axis), so our result will follow.\\

Let $\theta_R^{-}<\theta_R^+$ be the tangent angles for the points at which $G_{-R}^+$ intsersects the $x$-axis. Then note that we have\\

\[R=\int_0^{\theta_R^+}g(u)\tan (u)\dif u\leq\enVert{g}_{L^{\infty}\left(S^1\right)}\ln(\sec \left(\theta_R^+\right)\leq \enVert{g}_{L^{\infty}\left(S^1\right)}\sec\left(\theta_R^+\right).\]
\\
Hence $\sec \left(\theta_R^+\right)\geq\frac{R}{\enVert{g}}_{L^{\infty}\left(S^1\right)}$. A similar calculation gives the same bound for $\sec \left(\theta_R^{-1}\right)$. Then note that

\begin{align*}
\int_{\theta_R^-}^{\theta_R^+}g(u)\dif u&=w_g-\int_{-\frac{\pi}{2}}^{\theta_R^{-}}g(u)\dif u-\int_{\theta_R^+}^{\frac{\pi}{2}}g(u)\dif u\\
&\geq w_g-\enVert{g}_{L^{\infty}\left(S^1\right)}\left(\theta_R^- +\frac{\pi}{2}\right)-\enVert{g}_{L^{\infty}\left(S^1\right)}\left(\frac{\pi}{2}-\theta_R^+\right)\\
&=w_g-\enVert{g}_{L^{\infty}\left(S^1\right)} \arccsc\left(\sec\left(\theta_R^-\right)\right)-\enVert{g}_{L^{\infty}\left(S^1\right)}\arccsc\left(\sec\left(\theta_R^+\right)\right)\\
&\geq w_g-\enVert{g}_{L^{\infty}\left(S^1\right)}\arccsc\left(\frac{R}{\enVert{g}_{L^{\infty}\left(S^1\right)}}\right)-\enVert{g}_{L^{\infty}\left(S^1\right)}\arccsc\left(\frac{R}{\enVert{g}_{L^{\infty}\left(S^1\right)}}\right)\\
&=w_g-2\enVert{g}_{L^{\infty}\left(S^1\right)}\arcsin \left(\frac{\enVert{g}_{L^{\infty}\left(S^1\right)}}{R}\right).
\end{align*}

If we define $\psi_R^-$ and $\psi_R^+$ to be the tangent angles for the intersection points for $G_{-R}^-$, a similar process gives us \\

\[\int_{\psi_R^-}^{\psi_R^+}g(u)\geq w_g-2\frac{\enVert{g}_{L^{\infty}\left(S^1\right)}}{\sigma}\arcsin\left(\frac{\enVert{g}_{L^{\infty}\left(S^1\right)}}{\sigma R}\right).\]
\\
The claim then follows, with $C=\enVert{g}_{L^{\infty}\left(S^1\right)}$ or $C=\frac{\enVert{g}_{L^{\infty}\left(S^1\right)}}{\sigma}$, depending on the value of $\sigma$.
\end{proof}

Similar to our definition of $h_R$, we call $\ell_R$ the vertical reach of the curve $G^R$ and define it to be the distance between the horizontal supporting lines of the curve. We note that by construction we have $\ell_R(0)=(1+\sigma)R$.\\

We call $A_R(t)$, $h_R(t)$, $\ell_R(t)$ the enclosed area, horizontal reach, and vertical reach with respect to time of the flow defined in (7). We now want to find bounds on $A_R, h_R$, and $\ell_R$ as the curve evolves under ACSF.

\begin{proposition}
Let $C$ be the constant from Proposition 3.2.
\begin{enumerate}[label=(\roman*)]
\item $A_R(t)=-tw_g(1+\sigma)$.\\
\item $-R\leq t_R\leq -R+\frac{C}{w_g(\sigma+1)}$\\
\item $-t(1+\sigma)\leq \ell_R(t)\leq -t(1+\sigma)+\frac{2C}{w_g(1+\sigma)}$\\
\item $w_g-\frac{C}{-t(1+\sigma)+\frac{C}{w_g}}\leq h_R(t)\leq w_g$\\
\item $\kappa(\theta,t)\leq 2\left(\min_{\theta\in S^1}g(\theta)\right)\left[(1+\sigma)-\frac{C_{\kappa}}{t}\right]$, for all $t>\frac{3}{4}t_R$,\\
\end{enumerate}
where $C_{\kappa}=w_g+\frac{2C}{w_t(1+\sigma)}.$
\end{proposition}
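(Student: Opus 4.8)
The plan is to dispatch (i)--(iv) quickly and to put the real work into the curvature bound (v).

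\emph{(i)--(ii).} Since $\sigma$ is defined by $\int_{\pi/2}^{3\pi/2}g=\sigma\int_{-\pi/2}^{\pi/2}g=\sigma w_g$, one has $\int_{S^1}g\,\dif\theta=(1+\sigma)w_g$, so the area evolution equation \eqref{(0.1)} gives $A_R'\equiv-(1+\sigma)w_g$ on $(t_R,0)$; as the flow is extinct (hence $A_R\to0$) at $t=0$, integration yields $A_R(t)=-t(1+\sigma)w_g$, which is (i). Letting $t\downarrow t_R$ and comparing $A_R(t_R)$, the area bounded by $G^R$, with Proposition~3.1 gives $(1+\sigma)w_gR-C\le-t_R(1+\sigma)w_g\le(1+\sigma)w_gR$, which is (ii) after dividing by $-(1+\sigma)w_g$.

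\emph{(iii)--(iv).} Horizontal and vertical lines are stationary solutions of ACSF ($\kappa\equiv0$), so by the avoidance principle $h_R$ and $\ell_R$ are non-increasing; in particular $h_R(t)\le h_R(t_R)\le w_g$, the upper bound in (iv). Since $G^R_t$ lies in its $h_R(t)\times\ell_R(t)$ bounding box, $A_R(t)\le w_g\ell_R(t)$, i.e. $\ell_R(t)\ge-t(1+\sigma)$, the lower bound in (iii). For the matching upper bound on $\ell_R$ I would control the bounding-box deficit $D(t):=h_R(t)\ell_R(t)-A_R(t)\ge0$: at $t=t_R$ this is the area of the four ``corner'' regions between $G^R$ and its box, which the computation in the proof of Proposition~3.1 bounds by $\sim C$; and its $t$-derivative is governed by the extremal curvatures, which at $t_R$ take the translator-tip values $g(0)\kappa(0)=1$, $g(\pi)\kappa(\pi)=\sigma$ and whose decay is limited by Proposition~1.2(i), so that $D$ stays of order $C$. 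This gives $\ell_R(t)\le-t(1+\sigma)+O(C/w_g)$, and then $h_R(t)\ge A_R(t)/\ell_R(t)$ yields the lower bound in (iv).

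\emph{(v).} Fix a tangent angle $\theta_0$. For $\tau\in(t_R,0)$ let $W(\theta_0,\tau):=h(\theta_0,\tau)+h(\theta_0+\pi,\tau)$ be the distance between the two supporting lines of $G^R_\tau$ orthogonal to $\mathsf{N}(\theta_0)$, where $h(\cdot,\tau)=\langle X(\cdot,\tau),\mathsf{N}\rangle$ is the support function in the $\theta$-parametrization. Since $\mathsf{N}$ is $\tau$-independent in this parametrization and the (parametrization-free) normal component of the velocity is $-g\kappa$, we have $h_\tau=-g\kappa$, hence $\partial_\tau W(\theta_0,\tau)=-g(\theta_0)\kappa(\theta_0,\tau)-g(\theta_0+\pi)\kappa(\theta_0+\pi,\tau)\le0$. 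Integrating from $t$ to the extinction time $0$ (where $W\to0$, the curve collapsing to a point) and discarding the nonnegative $(\theta_0+\pi)$-term,
\[
\int_t^0 g(\theta_0)\kappa(\theta_0,\tau)\,\dif\tau\;\le\;W(\theta_0,t)\;\le\;\operatorname{diam}(G^R_t)\;\le\;h_R(t)+\ell_R(t)\;\le\;-t(1+\sigma)+C_\kappa,
\]
using (iii)--(iv) and $C_\kappa=w_g+\tfrac{2C}{w_g(1+\sigma)}$. On the other hand, the Harnack estimate, Proposition~1.2(ii), says $\kappa(\theta_0,\tau)\sqrt{\tau-t_R}$ is non-decreasing, so $\kappa(\theta_0,\tau)\ge\kappa(\theta_0,t)\sqrt{(t-t_R)/(\tau-t_R)}$ for $\tau\ge t$; evaluating $\int_t^0(\tau-t_R)^{-1/2}\dif\tau$ then bounds the left-hand side below by $g(\theta_0)\kappa(\theta_0,t)$ times $2|t|\big/\big(1+\sqrt{(-t_R)/(t-t_R)}\big)$. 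For $t>\tfrac34 t_R$, estimate (ii) gives $t-t_R>\tfrac14(-t_R)$, so this last factor exceeds $\tfrac12|t|$; combining the two inequalities yields $\kappa(\theta_0,t)\le\tfrac{2}{g(\theta_0)}\bigl[(1+\sigma)-C_\kappa/t\bigr]\le\tfrac{2}{\min_{S^1}g}\bigl[(1+\sigma)-C_\kappa/t\bigr]$, which is (v).

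\emph{Main obstacle.} The only substantive step is the last one: converting the \emph{integrated} curvature estimate --- in essence ``the supporting lines sweep a distance at most $\operatorname{diam}(G^R_t)$ before extinction'' --- into a \emph{pointwise, $R$-uniform} bound on $\kappa(\theta_0,t)$. This is exactly where the Harnack monotonicity of Proposition~1.2(ii) and the restriction $t>\tfrac34 t_R$ are both indispensable: the restriction, through (ii), keeps $t-t_R$ comparable to $|t_R|\sim R$ uniformly in $R$, so the Harnack conversion factor stays bounded away from $0$; dropping it lets that factor degenerate and destroys uniformity in $R$, which is the property that matters for the later compactness argument. A secondary delicate point is the boundedness of the deficit $D$ invoked for the upper bound in (iii).
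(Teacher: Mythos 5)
Your treatment of (i), (ii), the lower bound in (iii), both parts of (iv), and all of (v) matches the paper's in substance. In particular, for (v) the paper integrates the support function $\eta(\theta,t)=\langle G_R,\mathsf{N}\rangle$ (using $\eta\le(\ell^2+h^2)^{1/2}$) rather than your width function $W=h(\theta_0)+h(\theta_0+\pi)$ (using $W\le h+\ell$), but the mechanism is identical: integrate $-g\kappa$ in time up to extinction, then convert the integral bound to a pointwise one via Proposition~1.2(ii), with $t>\tfrac34 t_R$ and the $t_R$ estimate keeping the Harnack factor bounded below. (Your placement of $\min g$ in the denominator is in fact what the computation yields; the paper's stated inequality appears to have a typo there.)

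The genuine gap is the upper bound in (iii), which you are right to flag as delicate --- but your proposed fix does not close. You want to show the deficit $D=h_R\ell_R-A_R$ stays of order $C$, and for that you need $D'\le h_R\ell_R'+(1+\sigma)w_g$ to be nonpositive (or integrably small), i.e.\ you need $g(0)\kappa(0,t)+g(\pi)\kappa(\pi,t)\gtrsim 1+\sigma$ together with $h_R$ close to $w_g$. Neither is available at this stage: the lower bound on $h_R$ is part (iv), which is itself derived \emph{from} the upper bound on $\ell_R$, so that route is circular; and the lower bound on the tip curvatures cannot come from Proposition~1.2(i), which for the old-but-not-ancient flow on $[t_R,0)$ only gives $\kappa(\theta,\tau)\ge\kappa(\theta,t)\sqrt{(t-t_R)/(\tau-t_R)}$ and degenerates as $t\downarrow t_R$ --- it prevents fast decay relative to a later time, but gives no comparison with the initial translator-tip values $1$ and $\sigma$ (Corollary~1.3, which would give monotone curvature, applies only to ancient solutions). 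The paper instead bounds $\ell_R(t)$ from above by the avoidance principle: place a speed-$1$ upward translator with tip at height $-R-\epsilon$, disjoint from and below $G^R$, and a speed-$\sigma$ downward translator above it; avoidance forces the tips of $G^R_t$ to stay between the translator tips for all $t\in(t_R,0)$, so $\ell_R(t)\le(1+\sigma)\bigl(R-(t-t_R)\bigr)$, and the estimate for $t_R$ in (ii) converts this to $-t(1+\sigma)+O(C/w_g)$. You should replace the deficit argument with this comparison; the rest of your proof then goes through.
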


\begin{proof}
We find (i) by integrating the evolution of A(t) from $t$ to $0$, and we obtain
\begin{align*}
A(t)=&-t\int_{S^1}g(u)\dif u\\
=&-t\left(\int_{-\frac{\pi}{2}}^{\frac{\pi}{2}}g(u)\dif u+\int_{\frac{\pi}{2}}^{\frac{3\pi}{2}}g(u)\dif u\right)\\
=&-t(w_g+\sigma w_g).
\end{align*}

The inequalities from (ii) follow from (i) and the estimates in Proposition 3.2.\\

To prove the second inequality for (iii), let $\{\Gamma_t(\theta)\}_{t\in(-\infty,\infty)}$ be the family of curves, parametrized by tangent angle, that solves ACSF by translating at speed 1 in the $e_2$ direction such that $\Gamma_t\cap \{x\leq 0\}\neq\emptyset$ for all $t\leq 0$ and $\Gamma_t\cap\{x\leq 0\}=\emptyset$ for all $t>0$. We have for all $\epsilon>0$ that $\Gamma_{-R-\epsilon}(0)<G^R(0)$ and $\Gamma_{-R-\epsilon}\cap G^R=\emptyset$. Thus, by the avoidance principle, for all $t\in(t_R,0)$ we have $\Gamma_{-R-\epsilon+(t-t_r)}\cap G^R_t=\emptyset$. Using a similar argument with a translator traveling in the $-e_2$ direction at speed $\sigma$, taking $\epsilon$ to $0$, and using the estimate for $t_R$ in (ii) gives the inequality.\\

The second part of (iv) is clear since the initial curve lies in a slab of width $w_g$. The first part of (iv) follows from noting that we must have $A_R(t)\leq h_R(t)\ell_R(t)$ by simple geometry, then using the second inequality in (iii) and the inequality in (ii). The first inequality in (iii) follows similarly using the first part of (iv).\\

To prove (v), let $\eta(\theta,t)=\langle G_R(\theta,t),\mathsf{N}(\theta,t)\rangle$ be the support function. Note that by the convexity of the curve we have $\left(\ell^2(t)+h^2(t)\right)^{\frac{1}{2}}\geq\eta(\theta,t)$ for all $\theta\in S^1$. We also have by the evolution of the curve that $-\dpd{ }{t}\eta(\theta,t)=g(\theta)\kappa(\theta,t)$. Then, by (ii) of Proposition 1.2 we have
\begin{align*}
\eta(\theta,t)=&\int_{t}^0g(\theta)\kappa(\theta,\tau)\dif \tau\\
=&\int_{t}^0g(\theta)\kappa(\theta,\tau)\frac{(\tau-t_R)^{\frac{1}{2}}}{\tau-t_R)^{\frac{1}{2}}}\dif \tau\\
\geq&g(\theta)\kappa(\theta,t)(t-t_R)^{\frac{1}{2}}\int_{t}^0\frac{1}{(\tau-t_R)}\dif \tau\\
=&g(\theta)\kappa(\theta,t)(t-t_R)\left[2\left((-t_R)^{\frac{1}{2}}-(t-t_R)^{\frac{1}{2}}\right)\right]\\
=&g(\theta)\kappa(\theta,t)(t-t_R)^{\frac{1}{2}}\frac{-2t}{(-t_R)^{\frac{1}{2}}+(t-t_R)^{\frac{1}{2}}}\\
\geq&-g(\theta)\kappa(\theta,t)t\frac{(-t_R)^{\frac{1}{2}}}{2(-t_R)^{\frac{1}{2}}}\\
=&-\frac{1}{2}g(\theta)\kappa(\theta,t)t.\\
\end{align*}

This, and our bounds for $\ell(t)$ and $h(t)$ give us our bound for $\kappa$.
\end{proof}

With these bounds, particularly the one on $\kappa(\theta,t)$, and by the parabolic evolution $\kappa_t$, we have bounds on higher derivatives of $\kappa$ as well, which allows us to take $R\to\infty$ and, passing to a subsequence as necessary, claim that there exists a limiting ancient solution to ACSF lying in a slab of width $w_g$ and no smaller.
\end{section}

\begin{section}{Uniqueness}
In this section we prove that the only convex ancient solutions that live in a given slab are the translator constructed in section 2, and the ancient solution constructed in section 3. A key result that we will need involves the asymptotic behavior of such solutions as $t\to-\infty$. In particular, we show that that the asymptotic behavior of convex ancient solutions living in a slab of width $w_g$ and no smaller slab is that of translators of width $w_g$ in the appropriate direction. This was proved for CSF in the paper by Bourni, Langford, and Tinaglia \cite{BLT}. For the proofs of the statements used in \cite{BLT}, one can also follow the book by Andrews, et. al. \cite{EGF}.\\

We define $\Pi$ to be the slab $\Pi=\{(x,y):\envert{x}<\frac{w_g}{2}\}$, and we let $\{\Gamma_t\}_{t\in(-\infty,0)}$ be a convex ancient solution, parametrized by tangent angle, that lies in the slab $\Pi$ and no smaller slab. Note that in the compact case, we have that the turning angle $\theta$ takes on values in all of $S^1$, while in the noncompact case we either have $\theta\in(-\frac{\pi}{2},\frac{\pi}{2})$ or we have $\theta\in(\frac{\pi}{2},\frac{3\pi}{2}).$\\

Define $p_{-}(t)=\Gamma_t(0)$ when $0$ is in the domain of the turning angle, and similarly define $p_{+}(t)=\Gamma_t(\pi)$. By translating in space and time, we can arrange it so that we have $y(p_{\pm}(0))=0$ in the corresponding noncompact cases, and $\lim_{t\to 0^{-}}y(p_{\pm}(t))=0$ in the compact case, where $y(p_{pm}(t)):=\langle p_{\pm}(0),e_2\rangle$.\\

As in section 3 above, we let $\{G^+_t\}_{t\in(-\infty,\infty)}$ be the translator moving at speed 1 in the positive $e_2$ direction, and we let $\{G_t ^-\}_{t\in(-\infty,\infty)}$ be the translator moving at speed $\sigma$ in the negative $e_2$ direction, so that they both lie in the slab $\Pi$ and no smaller slab.\\

We first have as preliminary results a slight adaptation of a result from \cite{BLT} (c.f., \cite{EGF}), the proofs of which follow exactly as in the $g=1$ case (CSF) considered in the paper, so we omit them here.

\begin{lemma}

The translated family $\{\Gamma^+_{s,t}\}_{t\in(-\infty,-s)}$ defined by\\

\[\Gamma_{s,t}^+=\Gamma_{t+s}-p_-(s)\]
\\
converges locally uniformly in the smooth topology as $s\to-\infty$ to the translator\\

\[\{r_-G_{r_-^{-2}t}\}_{t\in(-\infty,\infty)}\]
\\
where $r_-=\lim_{s\to-\infty}(g(0)\kappa(0,s))^{-1}.$

Similarly, the translated family $\{\Gamma^-_{s,t}\}_{t\in(-\infty,-s)}$ defined by\\

\[\Gamma_{s,t}^-=\Gamma_{t+s}-p_+(s)\]
\\
converges locally uniformly in the smooth topology as $s\to\-\infty$ to the translator\\

\[\{r_+G_{r_+^{-2}t}\}_{t\in(-\infty,\infty)}\]
\\
where $r_+=\sigma \lim_{s\to-\infty}(g(\pi)\kappa(\pi,s))^{-1}$.\\

Additionally, the solution $\{\Gamma_{t}\}_{t\in(-\infty,a)}$, where $a=0$ in the compact case and $a=\infty$ in the noncompact case, sweeps out all of $\Pi$.
\end{lemma}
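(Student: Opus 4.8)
The plan is to prove Lemma 4.1 by the standard rescaling-and-extraction-of-a-limit argument, adapted from \cite{BLT} and \cite{EGF}, relying on the monotonicity supplied by Corollary 1.3. First, fix the noncompact case with $\theta\in(-\frac{\pi}{2},\frac{\pi}{2})$ (the case $\theta\in(\frac{\pi}{2},\frac{3\pi}{2})$ and the compact case are identical after the obvious relabeling). By Corollary 1.3, $\kappa(0,t)$ is nondecreasing as $t$ increases; since the solution lies in the fixed slab $\Pi$ of width $w_g$, the support function is bounded, which by the Harnack inequality (Proposition 1.2(ii)) forces $\kappa(0,t)$ to stay bounded above as $t\to-\infty$. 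Hence $\kappa(0,s)$ has a finite limit as $s\to-\infty$, and $r_-:=\lim_{s\to-\infty}(g(0)\kappa(0,s))^{-1}\in(0,\infty]$; boundedness from below of $\kappa(0,s)$ away from $0$ (again from the slab width being exactly $w_g$ and no smaller, so the curve cannot flatten entirely) gives $r_-<\infty$. The translated curves $\Gamma^+_{s,t}=\Gamma_{t+s}-p_-(s)$ all pass through the origin with a vertical tangent there, and remain in a fixed-width slab.

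Next, I would establish uniform curvature and higher-derivative bounds on compact subsets of space-time for the family $\{\Gamma^+_{s,\cdot}\}$ as $s\to-\infty$. The upper bound on $\kappa$ on any fixed time interval $[-T,T]$ follows from the monotonicity in Corollary 1.3 together with the slab constraint exactly as in the estimate for $\kappa$ in Proposition 3.4(v): pin the support function between the reach bounds and invoke Proposition 1.2(ii). A lower bound $\kappa\geq c>0$ on compact subsets follows because the curve lies in no slab thinner than $w_g$ and hence cannot have a locally-flat piece in the limit — more precisely, if $\kappa$ degenerated one could use the evolution equation $\kappa_\tau=\kappa^2((g\kappa)_{\theta\theta}+g\kappa)$ and the strong maximum principle to rule this out. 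The parabolic equation for $\kappa$ then bootstraps to bounds on all derivatives. By Arzelà–Ascoli, a subsequence of $\{\Gamma^+_{s,\cdot}\}$ converges locally smoothly to a limit flow $\{\Gamma^\infty_t\}$, which is a convex eternal solution to ACSF lying in $\Pi$, passing through the origin at $t=0$ with vertical tangent.

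Then I would identify the limit as the translator. Since $\kappa(0,s)\to r_-^{-1}/g(0)$ and, by the monotonicity, $\kappa_\tau(0,s)=\kappa(0,s)((g\kappa)_{\theta\theta}+g\kappa)\big|_{(0,s)}\to 0$ along the subsequence (a nonnegative quantity with convergent integral in $t$ over $(-\infty,0)$ must subconverge to $0$), the limit flow satisfies $\kappa_\tau=0$ at $\theta=0$, $t=0$. Apply the strong maximum principle to the evolution equation for $Q=\kappa_\tau/\kappa$ from Proposition 1.2 — which satisfies $Q_\tau=\kappa^2 Q_{\theta\theta}+2\kappa\kappa_\theta Q_\theta+2Q^2\geq$ a linear parabolic operator applied to $Q$ — and conclude $Q\equiv 0$ on the limit, i.e. $\kappa_\tau\equiv 0$, so the limit is a translator. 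By Proposition 2.1 translators in the $e_2$ direction are unique up to scaling, translation and time-shift, so the limit is $\{r_-G_{r_-^{-2}t}\}$ normalized to pass through the origin with vertical tangent at $t=0$; this forces the scaling factor to be exactly $r_-$ by matching the curvature at $\theta=0$. Uniqueness of the limit independent of the subsequence then upgrades subsequential convergence to full convergence. The argument for $\Gamma^-_{s,t}$ and $p_+$ is the same with the $-e_2$ translator of speed $\sigma$; the factor $\sigma$ in $r_+$ arises because that translator lies in the slab $\Pi$ only after being sped up by $\sigma$, as in section 3. Finally, that $\{\Gamma_t\}$ sweeps out all of $\Pi$: any point of $\Pi$ is eventually enclosed (in the compact case) or swept past (in the noncompact case) because, as $t\to-\infty$, the curve near $p_\pm(t)$ looks like a grim-reaper-type translator escaping to spatial infinity in the $\mp e_2$ direction while remaining within $\Pi$ and filling it up; one makes this quantitative by comparing with the translator timeslices just constructed and using the avoidance principle.

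The main obstacle I anticipate is the rigidity step — ruling out that $\kappa$ degenerates (so that the limit is not, say, a pair of parallel lines or a lower-dimensional object) and then promoting "$\kappa_\tau=0$ at one point of the limit" to "$\kappa_\tau\equiv 0$" via the strong maximum principle applied to $Q$. This requires care because $Q$ solves a quasilinear equation and one must check the coefficients ($\kappa^2$, $2\kappa\kappa_\theta$) are locally bounded on the limit, which is exactly what the uniform estimates of the previous step guarantee; the extra $2Q^2$ term only helps since it is nonnegative and vanishes where $Q$ does. Everything else is a routine transcription of the CSF argument in \cite{BLT,EGF}, which is why the authors omit it.
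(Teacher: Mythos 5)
Your proposal is essentially the argument the paper intends: the paper omits the proof entirely, stating only that it is a slight adaptation of the corresponding result in \cite{BLT} (cf.\ \cite{EGF}), and your sketch---Harnack monotonicity, uniform curvature and derivative estimates, subsequential smooth limits, the rigidity case $Q\equiv 0$ of the Harnack inequality to identify the limit as a translator, and uniqueness of translators to remove the subsequence---is precisely that standard argument carried over to the anisotropic setting. The one step I would tighten is the claim $r_-<\infty$: the cleanest reason is that a degenerate ($\kappa\to 0$) limit at $p_-$ would be a line with \emph{horizontal} tangent through the origin, which cannot lie in the vertical slab $\Pi$, rather than the appeal to the slab width being exactly $w_g$.
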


Since our convex ancient solutions lie in the slab $\Pi$, it is clear that we have $r_{\pm}\leq 1$. We now show that we have $r_{\pm}=1$, or in other words, that the asymptotic translators are of maximal width. The idea, again following and adapting the proof in \cite{BLT}, is to estimate the area enclosed by the ancient solution by inscribed trapezia, then show that the rate of growth of the enclosed area of the solution as we take $t\to-\infty$  would be too great if $r<1$.

\begin{lemma}
The asymptotic translators are of maximal width, i.e., $r_{\pm}=1$.
\end{lemma}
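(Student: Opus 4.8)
The plan is to argue by contradiction: suppose one of the asymptotic translators, say the one in the $+e_2$ direction, has width $r_- w_g < w_g$, i.e.\ $r_- < 1$. The strategy, following \cite{BLT}, is to extract two competing estimates for the area $A(t)$ enclosed by $\Gamma_t$ (in the compact case) or for the area swept out between the flow and a fixed reference line (in the noncompact case) as $t \to -\infty$: a lower bound coming from the evolution equation \eqref{(0.1)}, which forces linear growth with slope exactly $\int_{S^1} g$ in the compact case (or the analogous exact rate in the noncompact case), and an upper bound coming from inscribing trapezia whose parallel sides sit on the two vertical supporting lines at height determined by the asymptotic translator profiles. If $r_- < 1$, the asymptotic translator near $p_-(t)$ is strictly thinner than $w_g$, so the portion of the region near that end is captured by a trapezoid whose average width is bounded strictly below $w_g$; integrating the width deficit against the vertical extent of the region, which grows linearly in $-t$ by the $\ell_R$-type bounds (here obtained directly from the Harnack-type Proposition 1.2 applied to $p_\pm$), produces an \emph{upper} bound on the area whose leading coefficient is strictly smaller than the forced lower bound. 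This contradiction gives $r_- = 1$, and symmetrically $r_+ = 1$.

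More concretely, here are the steps in order. First, use Lemma 4.1 to record the precise asymptotic shape near each end: for $s \to -\infty$, $\Gamma_{t+s} - p_-(s)$ converges smoothly to $r_- G_{r_-^{-2}t}$, and similarly near $p_+$, with $r_+ = \sigma\lim (g(\pi)\kappa(\pi,s))^{-1}$; in particular the curve $\Gamma_t$, in a neighborhood of its ``bottom'' point, is uniformly close to a vertical translate of $r_- G^+$, which is confined to the slab $\{|x| < r_- w_g/2\}$. Second, establish the linear-in-$(-t)$ lower bound for the vertical reach $\ell(t)$ of $\Gamma_t$: the points $p_\pm(t)$ travel vertically at asymptotic speeds $r_-^{-2}\cdot(\text{something})$ — more simply, use that $-\partial_t\langle p_-(t), e_2\rangle = g(0)\kappa(0,t)$ together with the monotonicity from Corollary 1.3 (so $\kappa(0,t)$ is nondecreasing, hence bounded below as $t \to -\infty$ by its limit $(g(0) r_-)^{-1} > 0$) to get $\ell(t) \ge c(-t)$ for a constant $c > 0$ and all $t$ sufficiently negative. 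Third, for the area: in the compact case $A(t) = A(0) - t\int_{S^1}g$, so $A(t)/(-t) \to \int_{S^1} g = w_g(1+\sigma)$; in the noncompact case replace $A(t)$ by the area between $\Gamma_t$ and a fixed horizontal line and get the analogous exact asymptotic rate from \eqref{(0.1)}. Fourth, bound $A(t)$ from above by a trapezoid (or a union of two trapezia meeting at the widest horizontal slice): the trapezoid inscribed between the two vertical supporting lines has area at most (average horizontal width)$\times \ell(t)$, and because the lower portion is asymptotically governed by the thinner translator $r_- G^+$, the average width over the bottom $\sim c(-t)$ of vertical extent is at most $r_- w_g + o(1)$; carrying this through (and using $r_+ \le 1$ for the top) yields $\limsup_{t\to-\infty} A(t)/(-t) \le w_g(r_- + \sigma)$ — wait, one must be careful to match normalizations, but the point is the coefficient is $< w_g(1+\sigma)$ precisely when $r_- < 1$. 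Comparing with the exact rate from Step 3 forces $r_- \ge 1$, hence $r_- = 1$; the argument for $r_+$ is identical with the roles of the two ends and the speed $\sigma$ swapped.

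The main obstacle I expect is Step 4 — making the trapezoidal upper bound genuinely capture the width deficit uniformly. The subtlety is that Lemma 4.1 only gives \emph{local} smooth convergence near $p_\pm(s)$ after translating, so one controls the curve only on compact subsets; to convert this into a bound on the area of the whole enclosed region one must show the region does not ``bulge'' outside the union of the two asymptotic-translator slabs except in a region of bounded vertical extent. This should follow from the avoidance principle comparing $\Gamma_t$ with the asymptotic translators $r_\pm G^\pm$ themselves (suitably translated in space and time), exactly as in the $g=1$ case: the translator $r_- G^+$ has width $r_- w_g$, and once $\Gamma_t$ is trapped below a space-time translate of it near the bottom, the enclosed region near the bottom is pinched into a slab of width $r_- w_g$. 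The remaining work is bookkeeping: choosing the crossover height between the two trapezia, handling the $o(-t)$ transition region, and verifying that the compact and noncompact cases can be treated in parallel by using \eqref{(0.1)} on the appropriate area functional. I would present the compact case in full and indicate the (cosmetic) modifications for the noncompact case.
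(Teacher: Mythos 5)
Your list of ingredients (area evolution, asymptotic translators, Harnack, trapezia) matches the paper's, but the direction of the comparison is inverted, and the inverted version does not close. The normalization issue you flag in Step 4 (``wait, one must be careful to match normalizations'') is fatal rather than cosmetic. Here is why. By the Harnack inequality, $\kappa(0,t)$ is nondecreasing, so the tip $p_-(t)$ descends at speed $g(0)\kappa(0,t)\geq r_-^{-1}$; hence the vertical extent of the lower portion of the region is $\sim r_-^{-1}(-t)$, \emph{not} $\sim(-t)$. A circumscribing region of width $r_-w_g$ and height $r_-^{-1}(-t)$ therefore has area $\sim r_-w_g\cdot r_-^{-1}(-t)=w_g(-t)$: the width deficit $r_-$ is exactly cancelled by the height excess $r_-^{-1}$, your upper bound has leading coefficient $w_g(1+\sigma)$ rather than $w_g(r_-+\sigma)$, and there is no contradiction with the lower bound from \eqref{(0.1)}. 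Separately, the ``pinching'' you invoke --- that the whole lower half of the region is trapped in a slab of width $r_-w_g$ --- does not follow from Lemma 4.1, whose convergence is only locally uniform about $p_-(s)$, and it is in fact false: the horizontal slice-width is a concave function of height vanishing at the tips and attaining values arbitrarily close to $w_g$ (since the solution lies in no smaller slab and sweeps out all of $\Pi$), so outside a bounded neighborhood of the tip the slices are generically wider than $r_-w_g$.

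The paper's argument runs the comparison the other way, and the mechanism of the contradiction is that the solution is too \emph{tall}, not too narrow. The area evolution gives the \emph{upper} bound $A_-(t)\leq -w_gt$ (in the noncompact case, for the area between $\Gamma_t$ and the $x$-axis). The \emph{lower} bound comes from an inscribed trapezoid whose top side lies on the $x$-axis and has length $\geq w_g-\delta$ (because the solution sweeps out all of $\Pi$), whose bottom side sits a bounded distance $C_\delta$ above the tip and has length $\geq r_-w_g-\delta$ (from the local convergence to $r_-G^+$), and whose height is $\geq r_-^{-1}(-t)-C_\delta$. This yields
\[-w_gt\;\geq\;A_-(t)\;\geq\;\tfrac{1}{2}\bigl(w_gr_-+w_g-2\delta\bigr)\bigl(-r_-^{-1}t-C_\delta\bigr),\]
whose leading coefficient on the right is $\tfrac{1+r_-}{2r_-}w_g>w_g$ whenever $r_-<1$; letting $t\to-\infty$ and then $\delta\to0$ forces $r_-\geq1$, hence $r_-=1$. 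The essential input your proposal is missing is the pairing of the nearly-full-width slice at a \emph{fixed} height with the excess tip speed $r_-^{-1}$; with only the thin end and the naive width-times-height bound, the two effects cancel identically.
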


\begin{proof}
For notational convenience, let $\kappa_{\infty}^-=\lim_{s\to-\infty}\kappa(0,s)$ and let $\kappa_{\infty}^+=\lim_{s\to-\infty}\kappa(\pi,s)$. We have by the Harnack type inquality and Proposition 3.4 that\\

\[-y(p_-(t))\geq-\left(g(0)\kappa_{\infty}^-\right)t=-r_-^{-1}t\]
\\
and\\

\[y(p_+(t))\geq- \left(g(\pi)\kappa_{\infty}^+\right)t=-\sigma r_+^{-1}t.\]
\\
First, suppose that $\{\Gamma_t\}_{t\in(-\infty,\infty)}$ is noncompact with turning angle $\theta\in(-\frac{\pi}{2},\frac{\pi}{2})$. Let $A(t)$ and $B(t)$ be two points on $\Gamma_t$ such that $y(A(t)=y(B(t)=0$ and $x(A(t)>x(B(t))$. Let $A_-(t)$ be the area enclosed by $\Gamma_t$ and the $x$-axis. We then have\\

\[-A_-'(t)=\int_{\theta(B(t))}^{\theta(A(t))}g(\xi)\dif \xi\leq w_g.\]
\\
Integrating from $-t$ to $0$, we have $A_-(t)\leq-w_gt$. Let $\delta\in(0,1)$. Since the ancient solution sweeps out the whole slab $\Pi$, we can find some $t_{\delta}<0$ such that $w_g\geq x(A(t))-x(B(t))\geq w_g-\delta$ for all $t<t_{\delta}$. Taking $t_\delta$ smaller if necessary, we can also find points $\underline{q}^{\pm}(t)\in\Gamma_t$ and a constant $C_\delta$ such that $y(\underline{q}^+(t))=y(\underline{q}^-(t))$, while $w_gr_--\delta<x(\underline{q}^+(t))-x(\underline{q}^-(t))<w_gr_-$ and $0<y(\underline{q}^{\pm}(t))-y(p_-(t))<C_\delta$.\\

The area enclosed by $\Gamma_t$ and the $x$-axis is bounded below by the area of the inscribed trapezoid with vertices $A(t), B(t)$ and $\underline{q}^{\pm}(t)$. So we have\\

\[-w_gt\geq A_-(t)\geq \frac{1}{2}(w_gr_-+w_g-2\delta)(-r_-^{-1}t-C_{\delta}).\]
\\
Multiplying both sides by $2r_-$ and rearranging, we get\\

\[C_{\delta}r_-(w_g(r_-+1)-2\delta)\geq-t(w_g(1-r_-)-2\delta)\]
\\
for all $t<t_\delta$. Taking $t\to -\infty$, we have that\\

\[w_g(1-r_-)-2\delta\leq 0\]
\\
for $\delta\in (0,1)$. Taking $\delta\to 0$ gives us that $r_-=1$.

Taking the other noncompact case, we have $-A_+'(t)\leq \sigma w_g$ and so $A_+(t)\leq -\sigma w_gt$. Taking $\delta\in (0,1)$ and defining $\overline{q}^{\pm}$ and a new constant $C_\delta$ similarly as above we get\\

\[-\sigma w_g t\geq A_-(t)\geq \frac{1}{2}(w_gr_++w_g-2\delta)(-\sigma r_+^{-1}t-C_{\delta}).\]
\\
Factoring out a $\sigma$, we proceed exactly as in the other noncompact case to get $r_+=1$. The compact case is proved by bounding the area by the sum of the two trapezia to get the inequality\\

\[-(1+\sigma)w_gt\geq-\frac{1}{2}(w_gr_{-}+w_g-2\delta)(r_{-}^{-1}t+C_{\delta})-\frac{1}{2}(w_gr_{+}+w_g-2\delta)(\sigma r_{+}^{-1}t+C_{\delta})\]
\\
for all $t<t_{\delta}$. Carrying out similar calculations as above, we have that\\

\[\left[-w_g(r_{-}^{-1}+\sigma r_{+}^{-1}-(1+\sigma))-2\delta(r_{-}^{-1}+\sigma r_{+}^{-1})\right]t\leq\left[w_g(r_{-}+r_{+}+2)-4\delta\right]C_{\delta}\]
\\
for all $t<t_{\delta}$. Taking $t\to-\infty$ implies that\\

\[w_g(r_{-}^{-1}+\sigma r_{+}^{-1}-(1+\sigma))\leq 2\delta (r_{-}^{-1}+\sigma r_{+}^{-1})\]
\\
for all $\delta>0$. Taking $\delta\to0$ then gives us the result for the compact case.
\end{proof}

\begin{theorem}
\label{thm}
Let $g:S^1\to\rr_+$ be a smooth and strictly positive function, $v\in S^1$, and $w\in\mathbb{R}_+$. There exists a unique, up to translation, compact ancient solution to ACSF with respect to $g$ that lies within a slab parallel to $v$ of width $w$ and in no smaller slab. There exist two, up to translation, translating solutions to ACSF with respect to $g$ that lie within a slab parallel to $v$ of width $w$ and in no smaller slab, one that travels in the $v$ direction and one that travels in the $-v$ direction.
\end{theorem}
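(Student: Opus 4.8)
The plan is to deduce the theorem by combining the constructions of Sections~2 and~3 with the asymptotic analysis established above. I begin with the two reductions already used in the text: an orthogonal change of coordinates (which replaces $g$ by a rotate of itself, leaving the hypotheses intact) lets me assume $v=e_2$, and the parabolic rescaling $X\mapsto\lambda X$, $t\mapsto\lambda^2 t$ — which preserves the flow and the factor $g$, since $g$ depends only on the normal direction — lets me assume $w=w_g$, so that the slab in question is $\Pi=\{|x|<w_g/2\}$. For the translator statement, observe that a translating solution contained in a slab parallel to $v$ must translate in a direction parallel to $v$ (there is no non-stationary compact translator, so the initial curve is unbounded, and an unbounded translator with velocity transverse to $v$ escapes the slab in finite time), hence in the $e_2$ or the $-e_2$ direction; by Proposition~2.1 and Corollary~2.2 each of these two directions carries exactly one translating solution, up to translation, that lies in $\Pi$ and in no thinner slab — the $e_2$ one at speed $1$, the $-e_2$ one at speed $\sigma$. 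These two are genuinely distinct as solutions, since one moves in the $e_2$ direction and the other in the $-e_2$ direction and a spatial translation preserves the direction of motion of a translator. This settles the translator part.

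For the compact part, existence in $\Pi$ (and in no thinner slab) is Theorem~3.1, so it suffices to show that any two compact ancient solutions $\{\Gamma_t\}$ and $\{\hat\Gamma_t\}$ lying in $\Pi$ and in no thinner slab agree up to a translation. By the two lemmas above, the backward limits of such a solution at its two tips $p_\pm$ are translators of widths $r_\pm w_g$, and $r_\pm=1$; being translators of width $w_g$ contained in $\Pi$, they are forced to be the centered translators $G^+$ (speed $1$) and $G^-$ (speed $\sigma$), each a priori only up to a vertical translation, i.e. a time shift. Normalize each of $\Gamma$ and $\hat\Gamma$ by a horizontal translation so its slab is exactly $\Pi$, and by a vertical and a time translation so it becomes extinct at the origin at $t=0$. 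Then the area evolution (0.1) forces $A(t)=-t\,w_g(1+\sigma)$; splitting the enclosed region by the $x$-axis and feeding in the cap estimates from the proof of the maximal-width lemma, the portions of the area above and below the axis are separately forced to equal $-\sigma w_g t$ and $-w_g t$. These identities pin the vertical phases of the two asymptotic translators, so with this normalization $\Gamma$ and $\hat\Gamma$ have identical $t\to-\infty$ asymptotic data at both tips; it remains to upgrade this to $\Gamma_t=\hat\Gamma_t$ for all $t$.

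I would carry out the upgrade through the support functions. Write $\eta_\Gamma,\eta_{\hat\Gamma}:S^1\times(-\infty,0)\to\rr$ for the support functions; each solves the scalar equation $\eta_t=-g(\theta)\kappa$ with $1/\kappa=\eta_{\theta\theta}+\eta$, which is uniformly parabolic on every $S^1\times(-\infty,T]$, $T<0$, because curvature is bounded there by Propositions~1.2 and~3.4. Their difference $w=\eta_\Gamma-\eta_{\hat\Gamma}$ then solves a linear parabolic equation; the matched asymptotic data established above forces $w\to0$ as $t\to-\infty$ (no residual translation is needed, the $\cos\theta,\sin\theta$ modes being controlled by the normalization and the area identities), while the common extinction at the origin forces $w\to 0$ as $t\to0^-$. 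A maximum principle then gives $w\equiv0$, i.e. $\Gamma_t=\hat\Gamma_t$ up to translation, which with Theorem~3.1 completes the compact case and hence the theorem.

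The main obstacle is this final maximum-principle step. It runs on a temporally non-compact interval, the linearized support-function equation carries a zeroth-order coefficient of unfavorable (positive) sign — so one cannot simply invoke $\sup$-norm monotonicity — and the obvious remedy of dividing $w$ by a positive solution of the same equation to cancel that term is obstructed globally on $S^1$, since the natural candidates $\cos(\theta-\theta_0)$ change sign there. Moreover the $t\to-\infty$ convergence furnished by the two lemmas is only local in space, so controlling $w$ uniformly on all of $S^1$ (in particular over the non-compact "side" regions that separate the two tips) is the genuinely delicate point; it is handled by combining the exponential decay of $w$ coming from the smooth convergence in the lemmas with the curvature bounds of Propositions~1.2 and~3.4 and the near-extinction behavior of the solutions, following the $g\equiv1$ (CSF) argument of \cite{BLT} (see also \cite{EGF}) essentially verbatim. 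A secondary, bookkeeping obstacle is to verify that the three available normalizations (two spatial, one temporal) really do synchronize the asymptotic data at both tips, which the two area identities of the second paragraph are arranged to guarantee.
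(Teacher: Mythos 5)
Your reductions and the translator half of the argument are fine and agree with what the paper does (Proposition~2.1, Corollary~2.2, and the observation that a translator confined to the slab must move parallel to it). The compact half, however, has a genuine gap at exactly the point you flag as the ``main obstacle,'' and the gap is not closed by the appeal to \cite{BLT}: the uniqueness argument there (and in this paper) is \emph{not} a linearized maximum principle for the difference of support functions, so there is no ``essentially verbatim'' argument to import. As you yourself note, the linearization $w_t=a\,w_{\theta\theta}+b\,w_\theta+c\,w$ carries a zeroth-order term of unfavorable sign, runs on a temporally non-compact interval, and the backward convergence supplied by Lemma~4.1 is only locally uniform in space, so $w\to 0$ as $t\to-\infty$ uniformly on $S^1$ is not available. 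Moreover the intermediate step meant to feed that argument is also not correct as stated: the identity $-A_-'(t)=\int_{\theta(B(t))}^{\theta(A(t))}g(\xi)\,d\xi\le w_g$ is an inequality that is only asymptotically saturated, so the areas above and below the axis are \emph{not} ``separately forced to equal'' $-\sigma w_g t$ and $-w_g t$; consequently your three normalizations only match the asymptotic data of $\Gamma$ and $\hat\Gamma$ up to $o(|t|)$ errors, which is not the exact synchronization your maximum principle would require.

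The paper's route avoids linearization entirely and is worth contrasting. It compares an arbitrary solution $\Gamma$ directly with the constructed solution $G$ via the scalar quantity $L(t)=-\langle\Gamma_t(0),e_2\rangle+\langle\Gamma_t(\pi),e_2\rangle$: the Harnack inequality (Corollary~1.3) together with the maximal-width lemma gives $g(0)\kappa(0,t)\ge 1$ and $g(\pi)\kappa(\pi,t)\ge\sigma$, hence $\frac{d}{dt}\bigl(L(t)+t(1+\sigma)\bigr)\le 0$ and the backward limits $L,L_0$ exist. If $L>L_0$, an avoidance argument applied separately to the two halves $\theta\in(-\pi,0)$ and $\theta\in(0,\pi)$ (after an $\epsilon$-shift in $e_1$ to create initial disjointness, then $\epsilon\to0$) shows $G_{t_0}$ lies strictly inside $\Gamma_{t_0}$, contradicting simultaneous extinction at $t=0$; hence $L=L_0$. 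Finally, replacing $\Gamma$ by its time-translate $\Gamma^\tau_t=\Gamma_{t-\tau}$ (which has $L_\tau>L_0$), running the same comparison, and letting $\tau\to0$ forces $\Gamma_t$ and $G_t$ to coincide. If you want to salvage your write-up, the cleanest fix is to replace your third and fourth paragraphs with this comparison scheme rather than trying to make the support-function linearization work.
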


\begin{proof}
Once again, we prove that this holds for $w=w_g$ and $v=e_2$. The proof is an adaptation of that found in Bourni-et. al. \cite{betal}.\\

Let $\{G_t\}_{t\in(-\infty,0)}$ be the solution constructed in section 3, and let $\{\Gamma_t\}_{t\in(-\infty,0)}$ be any other compact, convex ancient solution to ACSF that lies in the vertical slab of width $w_g$ and no smaller. Parametrize both by their respective tangent angles and define the quantities\\

\[L(t)=-\langle \Gamma_t(0),e_2\rangle+\langle\Gamma_t(\pi),e_2\rangle\]
\\
and\\
\\
\[L_0(t)=-\langle G_t(0),e_2\rangle+\langle G_t(\pi),e_2\rangle.\]
\\
Note that $L_0(t)$ corresponds to $\ell(t)$ in section 3. Since the backwards limit of our ancient solutions are both translators, curvature is nondecreasing for ancient solutions to ACSF, and $g(0)\kappa(0,t)\geq 1$ and $g(\pi)\kappa(\pi,t)\geq \sigma$ for each of our solutions, we have that\\
\\
\[\dod{ }{t}(L(t)+t(1+\sigma))\leq 0,\]
\\
so when we take a limit $t\to-\infty$, the limit exists (though it may be infinite). Let $L=\lim_{t\to\infty}L(t)$, and let $L_0=\lim_{t\to\infty}L_0(t)$, and note that by Proposition 3.4 we have that $L_0<\infty$. Note also that $-\langle \Gamma_t(0),e_2\rangle+t$ and $\langle \Gamma_t(\pi),e_2\rangle+\sigma t$ both have (possibly infinite limits) as $t\to\infty$, while $-\langle G_t(0),e_2\rangle+t$ and $\langle G_t(\pi),e_2\rangle+\sigma t$ have finite limits as $t\to\infty$.\\

We first show that $L=L_0$. To that end, suppose instead that $L>L_0$.  Let $\tilde{\Gamma_t^{\epsilon}}=\Gamma_t\big{|}_{\theta\in(-\pi,0)}-\epsilon e_1$ and let $\tilde{G_t}=G_t\big{|}_{\theta\in(-\pi,0)}$. Since $L>L_0$, there exists some $t_0$ such that $L(t)>L_0(t)$ for all $t<t_0$. Thanks to the existence of the limits above, and the fact that these limits are finite in the case of our constructed solution $\{G_t\}_{t\in(-\infty,0)}$, we can find some constant $c$ such that $y(\Gamma_t(\pi))+c>y(G_t(\pi))$ and $y(\Gamma_t(0))+c<y(G_t(0))$ for all $<t_0$. Further, since both solutions must converge at their tips to the appropriate translating solutions, there exists some $t_\epsilon<t_0$ such that $\tilde{\Gamma_{t_\epsilon}^\epsilon}+ce_2\cap\tilde{G_{t_{\epsilon}}}=\emptyset$. By the maximum principle, we then have $\tilde{\Gamma_t^{\epsilon}}+ce_2\cap\tilde{G_t}=\emptyset$ for all $t\in(t_{\epsilon},t_0)$. Taking $\epsilon\to 0$ gives us that$\tilde{\Gamma_{t_0}}+ce_2$ lies outside $\tilde{G_{t_0}}$.\\

We then repeat this argument, restricting to $\theta\in(0,\pi)$ to get that $\Gamma_{t_0}+ce_2$ lies outside $G_{t_0}$ on that side as well, and so, $G_{t_0}$ lies within $\Gamma_{t_0}$. By the strong maximum principle they cannot intersect at all, but this contradicts the fact that they both expire at time $t=0$. Thus we must have $L=L_0$.\\

Now, for $\tau>0$ define $\Gamma_t^\tau$ to be the translation of $\Gamma$ in time by $\tau$, i.e., $\Gamma_t^{\tau}=\Gamma_{t-\tau}$. Then note that $L_\tau>L=L_0$, and we can repeat the above arguement to show that $\Gamma_t^\tau$ lies outsidie of $G_t$ for all $t$. Taking $\tau\to 0$ gives that $\Gamma_t$ lies outside of $G_t$ for all $t$, but as they both expire at $t=0$, this can only happen if $\Gamma_t$ and $G_t$ coincide.\\

The proof that the translator is the unique noncompact solution that lies in a slab parallel to $v$ of width $w$ follows similarly, with the additional observation that
\[\lim_{\theta\to\frac{\pi}{2}^+}\tilde{\Gamma}_t^{\epsilon}(\theta)<-\frac{w_g}{2}\]
for all $\epsilon>0$, which allows us to use the avoidance principle despite both curves involved being noncompact.
\end{proof}

\end{section}

\bibliographystyle{acm}
\bibliography{ACSFbib}
\end{document}